\theoremstyle{plain}
\newtheorem{theorem}{Theorem}
\newtheorem{lemma}[theorem]{Lemma}
\newtheorem{corollary}[theorem]{Corollary}
\newtheorem{proposition}[theorem]{Proposition}
\theoremstyle{definition}
\newtheorem{definition}[theorem]{Definition}
\newtheorem{example}[theorem]{Example}
\theoremstyle{remark}
\newtheorem{remark}[theorem]{Remark}
\newcommand{\fm}{\mathfrak{m}}
\newcommand{\FF}{\mathbb{F}}
\newcommand{\GG}{\mathbb{G}}
\newcommand{\ZZ}{\mathbb{Z}}
\newcommand{\coker}{\operatorname{coker}}
\def\HH{\operatorname{H}}
\def\ext{\operatorname{Ext}}
\def\hm{\operatorname{Hom}}
\def\im{\operatorname{im}}
\def\rightarrownk{\operatorname{rank}}
\def\length{\operatorname{length}}
\numberwithin{equation}{section}
\begin{document}

\title{Filtrations of Totally Reflexive Modules}

\author[Denise A. Rangel Tracy]{Denise A. Rangel Tracy}

\address{ Department of Mathematics,  Syracuse University, Syracuse, NY USA}

\email{detracy@syr.edu}

%\urladdr{http://}

%\thanks{}

\date{\today}

\keywords{totally reflexive, exact zero divisor}

\subjclass[2010]{Primary 13D99: 18G99}

\begin{abstract}

 In this paper, we will introduce a subcategory of totally reflexive modules that have a saturated filtration by other totally reflexive modules. We will prove these are precisely the totally reflexive modules with an upper-triangular presentation matrix. We conclude with an investigation of the ranks of  $\ext^1$ of two such modules over a specific ring.\end{abstract}

\maketitle
\thispagestyle{empty}

%%%%%%%%%%%%%%%%%%%%%%%%%%%%%%%%%%%%%%%%%%%%%%%%%%%%%
\section{Introduction and Preliminaries}

Totally reflexive modules were introduced by Auslander and Bridger \cite {AusBr} as modules of Gorenstein dimension zero. It was not until 2002 when  Avramov and Martsinkovsky \cite{AvMar} first referred to them as totally reflexive, to better emphasize their homological properties. Over Gorenstein rings, totally reflexive modules are exactly the maximal Cohen-Macaulay modules, whose representation theory is well developed, for example see \cite{LWbook}, \cite{Yosbook}. However, over non-Gorenstein rings much less is known. Something that is known from  \cite{sing} is that if there exists a non-free totally reflexive module over a non-Gorenstein local ring with the residue field of characteristic 0,  then there exist infinitely many non-isomorphic indecomposable totally reflexive modules over the ring as well. In fact in \cite {BT} and \cite {Holm} infinite families of non-isomorphic indecomposable totally reflexive modules are constructed, both of which arise from exact zero divisors. 

Although totally reflexive modules were originally defined for a broader class of rings, for this paper we assume $R$ to be a commutative Noetherian local ring. A finitely generated $R$-module $M$ is called \emph{totally reflexive} if the biduality map, $\delta: M \rightarrow \hm_R(\hm_R(M,R), R)$ is an isomorphism, $\ext_R^i(M,R) =0 $ and  $\ext_R^i(\hm_R(M,R), R) =0 $  for all $i>0.$ 
Projective modules are obviously totally reflexive. We call a nonzero totally reflexive module \emph{nontrivial} if it is not a projective module.  A complex is called \emph {acyclic} if its homology is zero.  A \emph{totally acyclic complex} is an acyclic complex $\mathbb{A}$ whose \emph{dual} $\hm_R(\mathbb{A}, R)$ is also acyclic. A module being totally reflexive is equivalent to being a syzygy in a totally acyclic complex of finitely generated free modules. Recall that over a local ring $(R, \fm)$ a complex $(\FF, \partial)$ of free modules is said to be \emph{minimal} if $\im \partial_i^{\FF} \subseteq \fm\FF_{i-1}$ for all $i.$ 

Through out this paper we will investigate these modules via their presentation matrix, which is a matrix whose columns are a minimal generating set of the module. Presentation matrices are not unique. However, when over commutative Noetherian local rings two minimal presentation matrices of the same module must be equivalent, see \cite{equivmat}.

In this paper, we investigate the structure of totally reflexive modules over local non-Gorenstein Artinian rings by looking at their totally reflexive submodules. We are most interested in the case when the quotient module formed by the totally reflexive module and a totally reflexive submodule is also totally reflexive and of minimal length. If this occurs, then we say that the module has a \emph{saturated TR-filtration,} see Definition \ref{TRfiltDefn} for a more precise description. We prove that knowing if a totally reflexive module has a saturated TR-filtration is directly linked to the existence of a minimal presentation matrix of the module that is upper triangular. \\

\begin{textbf}{Theorem}\end{textbf}. \emph{Let $(R,\fm)$ be a non-Gorenstein ring with $\fm^3=0\neq \fm^2 $ that contains exact zero divisors, and suppose that $T$ is a totally reflexive $R$-module. There exists a saturated TR-filtration of $T$ if and only if $T$ has an upper triangular presentation matrix.\\}

From this theorem, we obtain two corollaries. Corollary \ref{cor1} gives further information about the form of this upper triangular presentation matrix. Additionally, Corollary \ref{UTres} proves the existence of a complete resolution in which every differential can be simultaneously represented by upper triangular matrices. We conclude with an extensive example of the theory, including a study of the number of ways a saturated TR-filtration can occur.

\subsection{Previously Known Results}
Since the results in this paper are proven with the ring being commutative local non-Gorenstein with the cube of the maximal ideal equaling zero, we give two previously known theorems which provide useful facts about these types of rings and totally reflexive modules over them. For a local Artinian ring $(R, \fm, k),$ set $e=\length (\fm/\fm^2)$ to be the embedding dimension of $R.$ 

\begin{theorem} \cite[Theorem 3.1]{Yos} \label{YosThm}
Let $(R,\fm,k)$ be a commutative local Artinian non-Gorenstein ring with $\fm^3=0\neq\fm^2.$ If there exist a nontrivial totally reflexive $R$-module $T,$ then the following conditions hold:
\begin{enumerate}[(a)]
\item $(0 :_R \fm)=\fm^2$
\item $\length (0 :_R \fm)=e-1,$ in particular, $\length(R)=2e.$ \label{YosThmlength}
\item Let $n$ be the minimal number of generators of $T,$ then $\length (T)=ne.$
\item $T$ has a  minimal free resolution of the form \label{YosThmReso}
$$\cdots \rightarrow R^n \xrightarrow{d_2} R^n \xrightarrow{d_1}R^n \rightarrow T \rightarrow 0.$$
\end{enumerate}
\end{theorem}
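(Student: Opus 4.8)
The plan is to extract everything from a minimal totally acyclic complex $\mathbb{F}\colon \cdots \to F_{i+1}\xrightarrow{\partial_{i+1}} F_i \xrightarrow{\partial_i} F_{i-1}\to\cdots$ of finitely generated free modules with $T=\coker(\partial_1)$, which exists because $T$ is totally reflexive. Two elementary observations drive the argument. First, minimality forces $\im(\partial_i)\subseteq\fm F_{i-1}$, so every syzygy $Z_i:=\ker(\partial_i)=\im(\partial_{i+1})$---and $T$ itself, via $T\cong\im(\partial_0)\subseteq\fm F_{-1}$---embeds in $\fm\cdot(\text{free})$ and is therefore annihilated by $\fm^2$. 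Second, since each entry of each $\partial_i$ lies in $\fm$ and $(0:_R\fm)\cdot\fm=0$, multiplication by any socle element kills every differential. These reduce the theorem to $k$-linear bookkeeping, with $V=\fm/\fm^2$ (so $\dim_k V=e$) and $W=\fm^2$; set $c=\dim_k W$.

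I would prove (a) first and independently. The inclusion $\fm^2\subseteq(0:_R\fm)$ is automatic, so suppose for contradiction some $s\in(0:_R\fm)\setminus\fm^2$; then $s\fm=0$ and $Rs\cong R/(0:_R s)=R/\fm\cong k$. Because $s\,\partial_i=0$ we get $sF_i\subseteq\ker(\partial_i)=\im(\partial_{i+1})=Z_i$, while $\fm Z_i\subseteq\fm\cdot\fm F_i=\fm^2 F_i$ but $sF_i\not\subseteq\fm^2 F_i$ (as $s\notin\fm^2$); hence there is $v$ with $sv\in Z_i\setminus\fm Z_i$. Thus $sv$ is a minimal generator of $Z_i$ that is also a socle element ($\fm\cdot sv=0$), and projecting $Z_i$ onto the coordinate of $Z_i/\fm Z_i$ spanned by $\overline{sv}$ splits the inclusion $Rsv\cong k\hookrightarrow Z_i$. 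Then $k$ is a direct summand of the totally reflexive module $Z_i$, so $k$ is totally reflexive; since $\operatorname{Gdim}_R k<\infty$ forces $R$ to be Gorenstein, this contradicts the hypothesis. The same splitting shows more generally that no minimal generator of any syzygy lies in $\operatorname{soc}(F)=\fm^2F$ (now using (a)), i.e. every $\partial_i$ has injective linear part $\bar\partial_i\colon F_i/\fm F_i\hookrightarrow \fm F_{i-1}/\fm^2 F_{i-1}$.

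The numerical core---parts (b) and (d) together---comes from the complex of linear parts. Writing $b_i=\operatorname{rank}F_i$, the injectivity just obtained (applied to $\mathbb{F}$ and, symmetrically, to the minimal totally acyclic complex $\hm_R(\mathbb{F},R)$, which is a complete resolution of $\hm_R(T,R)$) exhibits the associated graded complex as a doubly infinite minimal complex of free $\operatorname{gr}(R)$-modules $\cdots\to \operatorname{gr}(R)(-i)^{b_i}\to\operatorname{gr}(R)(-i+1)^{b_{i-1}}\to\cdots$. Once one checks this complex is \emph{exact}, comparing Hilbert series degree by degree (using $\dim_k\operatorname{gr}(R)_0=1$, ${}_1=e$, ${}_2=c$) yields $b_d-e\,b_{d-1}+c\,b_{d-2}=0$ for all $d\in\ZZ$, with characteristic polynomial $x^2-ex+c$. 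Since $T$ is nontrivial, all $b_i>0$, so $(b_i)$ is a positive, bounded, doubly infinite solution; a short root analysis shows the only such solution when $R$ is non-Gorenstein (so $c=\length(0:_R\fm)\ge 2$) has roots $1$ and $c$ with $1+c=e$. This forces $c=e-1$ and $b_i\equiv b_0=n$, giving (d) at once, together with $\length(R)=1+e+c=2e$ and $\length(0:_R\fm)=\dim_k\fm^2=e-1$, which is (b).

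Finally (c) is a length count. With (b) and (d) in hand, each free module has length $2en$, and the embedding $\fm Z_i\subseteq\fm^2 F_i\cong(\fm^2)^n$ gives $\length Z_i=n+\dim_k\fm Z_i\le n+(e-1)n=ne$ for every syzygy; additivity $\length Z_i+\length Z_{i-1}=\length F_i=2en$ then forces equality $\length Z_i=ne$, and since $T\cong Z_{-1}$ we get $\length T=ne$. The main obstacle is the third paragraph: verifying that the linear-part complex is exact (equivalently, that the minimal complete resolution is linear) and that the Betti numbers are bounded, so that the constant solution of the recursion is the one that occurs. This rigidity is precisely what excludes exponentially growing Betti numbers and pins down $\dim_k\fm^2=e-1$; the socle-splitting for (a) and the length bookkeeping for (b), (c) are comparatively routine once it is established.
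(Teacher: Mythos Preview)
The paper does not prove this theorem; it is quoted verbatim from Yoshino \cite{Yos} in the ``Previously Known Results'' subsection, so there is no in-paper argument to compare your proposal against. What follows is an assessment of your sketch on its own terms.

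Your treatment of (a) is correct and standard: a socle element outside $\fm^2$ would produce a minimal generator of a syzygy that is killed by $\fm$, hence a $k$-summand of a totally reflexive module, forcing $R$ Gorenstein. Your derivation of (c) from (b) and (d) via $\fm Z_i\subseteq\fm^2F_i$ and additivity of length is also fine.

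The difficulty is exactly where you flag it. Two independent steps in your argument for (b) and (d) are asserted but not carried out, and both are substantive:
\begin{itemize}
\item \emph{Exactness of the linear-part (associated graded) complex.} Injectivity of each $\bar\partial_i$ does not by itself yield exactness of $\operatorname{gr}(\mathbb F)$; one also needs that $\partial_i(\fm F_i)=\fm^2F_{i-1}$ (equivalently, surjectivity of the induced map $V^{b_i}\to W^{b_{i-1}}$) and the matching statement on the dual side. Without this, the Hilbert-series comparison does not give the clean recurrence $b_d-e\,b_{d-1}+c\,b_{d-2}=0$, only inequalities.
\item \emph{The ``short root analysis''.} Even granting the recurrence, positivity alone of a doubly infinite solution does not force constancy: with real roots $0<r_1<1<r_2$ one can have $b_d=Ar_1^d+Br_2^d>0$ for all $d$. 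What actually pins things down is the combination of positivity and \emph{integrality} for all $d\in\ZZ$ (forcing $r_1=1$), together with the non-Gorenstein hypothesis $c\ge 2$ to rule out the double-root case. You invoke ``bounded'' without justification; boundedness is a conclusion here, not an input.
\end{itemize}
In Yoshino's original proof and in the closely related arguments of Christensen--Veliche \cite{Acyclic}, these points are handled by a careful length/socle count on both $\mathbb F$ and $\hm_R(\mathbb F,R)$ that simultaneously establishes the recurrence and the constancy of the $b_i$; the linear-part complex is a convenient way to package this, but its exactness is part of what has to be proved. As written, your proposal is a correct outline with the two hardest verifications left open.
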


 \begin{remark}
Part (\ref{YosThmReso}) of the previous theorem implies that every differential in a minimal free resolution of $T$ can be presented by a square matrix. Moreover, any minimal presentation matrix of $T$ is a square matrix. This fact holds for a complete resolution of $T$ as well, since any syzygy in a complete resolution of a totally reflexive modules is also totally reflexive.
\end{remark}

\begin{theorem}\cite[Theorem 5.3]{BT} \label{BTcite}
 Let $(R, \fm)$ be a local ring with $\fm^3 = 0$ and $e  \geq 3.$ Let $x$ be an element of $\fm/\fm^2;$ the following conditions are equivalent.
\begin{enumerate} [(i)]
\item The element $x$ is an exact zero divisor in $R$.
\item  The Hilbert series of $R$ is $1+e\tau +(e-1)\tau^2,$ and there exists an exact sequence
of finitely generated free R-modules
$$ \FF: \qquad  F_3\rightarrow F_2 \xrightarrow{} F_1 \xrightarrow{\psi}F_0 \rightarrow F_{-1} $$
such that $\hm_R(\FF,R)$ is exact, the homomorphisms are represented by matrices with entries in $\fm,$ and $\psi$ is represented by a matrix in which some row has $x$ as an entry and no other entry from $\fm/\fm^2.$
 \end{enumerate}
\end{theorem}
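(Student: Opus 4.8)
The plan is to prove the two implications separately, treating (i) $\Rightarrow$ (ii) as essentially a construction and (ii) $\Rightarrow$ (i) as the substantive direction. For (i) $\Rightarrow$ (ii) I would start from an exact zero divisor $x$, so that there is $y \in R$ with $(0:_R x) = (y)$ and $(0:_R y) = (x)$; this yields the minimal totally acyclic complex $\cdots \to R \xrightarrow{x} R \xrightarrow{y} R \xrightarrow{x} R \to \cdots$ of rank-one free modules, which is an instance of $\FF$ with every $F_i = R$ and $\psi = (x)$. The $1\times1$ matrix $(x)$ trivially has a row, its only one, whose single entry is $x$; all maps have entries in $\fm$ since $x,y\in\fm$; and the dual complex has the same shape, hence is exact. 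For the Hilbert series I would note that $R/(x)$ is a nontrivial totally reflexive module, argue that $R$ must be non-Gorenstein (were $R$ Gorenstein, $\fm^3=0$ would force every principal ideal to have length at most $1+\length(\fm^2)$, incompatible with $(0:_R x)$ being principal once $e\geq 3$), and then invoke Theorem \ref{YosThm}(a),(b) to get $(0:_R\fm)=\fm^2$ of length $e-1$, hence the Hilbert function $1,e,e-1$.

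For (ii) $\Rightarrow$ (i), the first observation is that the hypotheses force $R$ to be non-Gorenstein: $\fm^2\subseteq(0:_R\fm)$ and $\length(\fm^2)=e-1\geq 2$, so the socle has length at least two. The module $T=\coker\psi$ is a syzygy in the totally acyclic complex $\FF$, hence a nontrivial totally reflexive module, so by the Remark following Theorem \ref{YosThm} all differentials of $\FF$ may be taken to be $n\times n$, and Theorem \ref{YosThm}(c) supplies the relation $\length(N)=\mu(N)\,e$ for every totally reflexive $N$. I will exploit the special row together with $\fm^3=0$ as follows: writing $\phi$ for the differential into $F_1$ and assuming the first row of $\psi$ is $(x,b_2,\dots,b_n)$ with each $b_k\in\fm^2$, the relation $\psi\phi=0$ read in this row gives $x\,\phi_{1\bullet}+\sum_{k\geq 2}b_k\phi_{k\bullet}=0$; since $b_k\phi_{k\bullet}\in\fm^2\fm\subseteq\fm^3=0$, this forces $x\,\phi_{1\bullet}=0$, a first indication that $x$ behaves like a rank-one block.

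The aim of the main step is to show that $R/(x)$ is totally reflexive, after which $x$ is an exact zero divisor by a short length computation. Since $\mu(R/(x))=1$, Theorem \ref{YosThm}(c) gives $\length(R/(x))=e$ and hence $\length((x))=e$; the first syzygy $(0:_R x)$ is totally reflexive with $R/(0:_R x)\cong(x)$, so $\length(0:_R x)=2e-e=e$ and $\mu(0:_R x)=1$, i.e.\ $(0:_R x)=(y)$ is principal; finally $(y)\cong R/(0:_R y)$ is totally reflexive of length $e$, giving $\length(0:_R y)=e$, and since $xy=0$ yields $(x)\subseteq(0:_R y)$ with both ideals of length $e$, we conclude $(0:_R y)=(x)$. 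Thus $(0:_R x)=(y)$ and $(0:_R y)=(x)$, which is precisely the assertion that $x$ is an exact zero divisor.

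The hard part will be producing the totally reflexive module $R/(x)$ from the special-row hypothesis. My approach is to show that the rank-one complex $\cdots\to R\xrightarrow{x}R\to\cdots$ splits off $\FF$ as a direct summand, so that $R/(x)$ becomes a direct summand of the totally reflexive module $T$ and is therefore itself totally reflexive. The obstacle is that a single column reduction need not clear the $\fm^2$-entries of the special row, since those entries need not lie in $(x)$; the splitting must instead be carried out simultaneously across several consecutive differentials, using $\fm^3=0$ to control the linear parts modulo $\fm^2$ and the exactness of both $\FF$ and $\hm_R(\FF,R)$ to propagate the reduction in both directions. I expect this simultaneous change of basis, rather than the length bookkeeping, to be the technical crux of the argument.
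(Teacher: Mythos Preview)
This theorem is not proved in the paper at all: it appears in the subsection ``Previously Known Results'' and is quoted verbatim from \cite[Theorem~5.3]{BT}, with no accompanying argument. So there is no ``paper's own proof'' against which to compare your proposal; the paper simply invokes the result as background.

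As a standalone attempt at the cited theorem, your outline is reasonable in shape but has a genuine gap in the direction (ii)~$\Rightarrow$~(i). You correctly identify the target --- show that $R/(x)$ is totally reflexive --- and you correctly carry out the length bookkeeping that would finish the argument once that is known. But the step you flag as ``the hard part,'' splitting the rank-one complex $\cdots\to R\xrightarrow{x}R\to\cdots$ off of $\FF$ as a direct summand, is not actually executed: you only observe that $x\phi_{1\bullet}=0$ and then assert that a simultaneous change of basis across several differentials should work, without constructing it. The obstruction you name (the $\fm^2$-entries in the special row need not lie in $(x)$) is real, and overcoming it is precisely where the work in \cite{BT} lies; your proposal stops at the point where the substantive linear algebra begins. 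In the direction (i)~$\Rightarrow$~(ii), your argument that $R$ is non-Gorenstein is also too sketchy: the parenthetical about ``every principal ideal having length at most $1+\length(\fm^2)$'' is not clearly formulated and does not obviously yield a contradiction with $(0:_R x)$ principal when $e\geq 3$; a cleaner route is to compute $\length(R/(x))+\length(R/(y))=\length(R)$ and compare with the Gorenstein Hilbert series $1+e\tau+\tau^2$.
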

An exact zero divisor is a special type of ring element, one that is quite significant in these results, see Definition \ref {defnexpair}.

%%%%%%%%%%%%%%%%%%%%%%%%%%%%%%%%%%%%%%%%%%%%%%%%%%%%%%%

\section {Filtrations and Upper Triangular Presentation Matrices}
Before we present the main theorem of this paper, we need to make note of a few facts when considering modules via their presentation matrices. 

\begin{lemma} \label{copyk} Let $(R,\fm,k)$ be a commutative local non-Gorenstein ring with $\fm^3=0\neq\fm^2$ and $M$ an $R$-module. If a presentation matrix  of $M$ has a column whose entries are contained in $\fm^2,$ then the first syzygy of $M$ contains a copy of $k$ as a direct summand. 
\end{lemma}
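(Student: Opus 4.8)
The plan is to show directly that the submodule generated by the special column splits off the first syzygy as a copy of $k$. Fix the presentation $R^m \xrightarrow{\varphi} R^n \to M \to 0$ afforded by the given presentation matrix $\varphi$; by our convention its columns $v_1,\dots,v_m \in R^n$ form a minimal generating set of the first syzygy $\Omega^1 M = \im\varphi$, and after reindexing we may assume that every entry of $v_1$ lies in $\fm^2$. The two things to establish are that $Rv_1 \cong k$ and that $Rv_1$ admits a complement inside $\Omega^1 M$.

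The first step is the observation that $v_1$ generates a copy of the residue field. Since the entries of $v_1$ lie in $\fm^2$ and $\fm^3 = 0$, we have $\fm v_1 = 0$, so $\fm \subseteq \operatorname{ann}_R(v_1)$. Because $\Omega^1 M \neq 0$ (it has a presentation matrix with a column) and a minimal generating set of a nonzero finitely generated module over a local ring consists of nonzero elements, we get $v_1 \neq 0$; hence $\operatorname{ann}_R(v_1) = \fm$ and $Rv_1 \cong R/\fm = k$.

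The second step is to produce the complement. Set $N = Rv_2 + \cdots + Rv_m$, so that $\Omega^1 M = Rv_1 + N$. I claim this sum is direct. If $a v_1 \in N$ for some $a \in R$, then since $R$ is local either $a$ is a unit — in which case $v_1 = a^{-1}(av_1) \in N$, contradicting the irredundancy of the minimal generating set $\{v_1,\dots,v_m\}$ (Nakayama) — or $a \in \fm$, in which case $a v_1 = 0$ by the first step. Either way $Rv_1 \cap N = 0$, and therefore $\Omega^1 M = Rv_1 \oplus N \cong k \oplus N$, exhibiting $k$ as a direct summand of the first syzygy.

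I do not anticipate a genuine obstacle; the only points needing care are the justification that $v_1 \neq 0$ and the irredundancy of a minimal generating set over a local Noetherian ring, both of which are immediate from Nakayama's lemma. It is worth remarking that the hypothesis $\fm^2 \neq 0$ is exactly what keeps the statement from being vacuous — when $\fm^2 = 0$ the only column with entries in $\fm^2$ is the zero column, which cannot appear in a minimal presentation of a module with nonzero syzygy — and that the argument uses neither that $R$ is non-Gorenstein nor that $M$ is totally reflexive, only that $(R,\fm)$ is local with $\fm^3 = 0$.
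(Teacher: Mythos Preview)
Your proof is correct, and it takes a genuinely different route from the paper's.

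The paper argues one step further along the resolution: writing $\cdots \to R^{b_2}\xrightarrow{\mathbf{N}} R^{b_1}\xrightarrow{\mathbf{M}} R^{b_0}\to 0$, it observes that because $\fm c_1=0$ the vectors $(x_i,0,\dots,0)^T$ lie in $\ker\mathbf{M}$ and extend to a minimal generating set of it, so after column operations $\mathbf{N}$ has first row $[x_1,\dots,x_e,0,\dots,0]$ and block-lower form below; from this block shape one reads off $\coker\mathbf{N}\cong k\oplus X$. You instead stay inside $\im\mathbf{M}\subseteq R^{b_0}$ and split it internally: $\fm v_1=0$ gives $Rv_1\cong k$, and minimality of the generating set forces $Rv_1\cap(Rv_2+\cdots+Rv_m)=0$. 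Your argument is shorter and more elementary --- it never needs to name $\mathbf{N}$ or manipulate its columns --- while the paper's version has the modest side benefit of producing an explicit presentation matrix for $\Omega^1 M$ exhibiting the $k$-summand. Your closing remark that neither the non-Gorenstein hypothesis nor $\fm^2\neq 0$ is used in the proof itself is accurate; the same is true of the paper's argument.
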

\begin{proof}
Let $\fm=(x_1, \dots , x_e),$ where $e$ is the embedding dimension of $R,$ and suppose $M$ has the presentation matrix  $\mathbf{M}= [c_1, \dots, c_s],$ where $c_i \in R^{b_0}.$ Also, after row and column operations, we may assume that $c_1 \subset \fm^2 R^{b_0}$. Let 
$$\FF: \qquad \cdots \rightarrow R^{b_2}\xrightarrow{\mathbf{N}}  R^{b_1}\xrightarrow{\mathbf{M}} R^{b_0}\rightarrow 0$$
be a deleted free resolution of $M$. Since $\mathbf{MN}=0,$ for $1\leq i\leq e$ we have that the elements $\left(\begin{array}{c} x_i\\ 0\\ \vdots \\ 0\end{array} \right),$  are part of a minimal generating set of the syzygies of $\mathbf{M}.$ Without loss of generality, assume that $\mathbf{N}$ has the form
$$\left[\begin{array}{cccccc}
 x_1 &\ldots & x_e& 0& \ldots &0\\
0    &\ldots & 0 & *& \ldots &*\\																	
\vdots& \ddots& \vdots&\vdots& \ddots& \vdots\\
0    &\ldots& 0 & *& \ldots &*\\																	
\end{array} \right]. $$

Note that the entries in the first row, after the $e$th column, are all zeroes. This is true since if there was a nonzero entry there, then it would be a linear combination of the first $e$ columns. Therefore, $\coker \mathbf{N} \cong k \oplus X,$ for some $R$-module $X$.
\end{proof}

\begin{proposition}\label{ksumcor} Let $(R,\fm,k)$ be a %commutative local
 non-Gorenstein ring with $\fm^3=0\neq\fm^2.$ If the $n$th syzygy of an $R$-module $M$,  $\Omega^R_n(M)$  contains a copy of $k$ as a direct sum, then $M$ is not totally reflexive.

\end{proposition}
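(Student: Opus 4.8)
The plan is to argue by contradiction: suppose $M$ is totally reflexive and $\Omega^R_n(M) \cong k \oplus X$ for some $R$-module $X$. Since every syzygy of a totally reflexive module is again totally reflexive, $\Omega^R_n(M)$ is totally reflexive, and hence so is each of its direct summands — totally reflexive modules are closed under direct summands (a summand of a module with a complete resolution has one, obtained by splitting the resolution). Thus it suffices to derive a contradiction from the assumption that $k$ itself is totally reflexive over such a ring $R$.

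The key step is to show that $k$ cannot be totally reflexive when $R$ is non-Gorenstein with $\fm^3 = 0 \neq \fm^2$. First I would invoke Theorem \ref{YosThm}: since we are assuming a nontrivial totally reflexive module exists (namely $k$, which is not free as $R$ is not a field — if $k$ were free then $R = k$, contradicting $\fm^2 \neq 0$), part (a) gives $(0:_R\fm) = \fm^2$, and part (b) gives $\length(R) = 2e$ where $e = \length(\fm/\fm^2)$. On the other hand, if $k$ is totally reflexive then in particular $\ext^i_R(k,R) = 0$ for all $i > 0$; but $\ext^i_R(k,R)$ computes (up to the usual identifications) whether $R$ has finite injective dimension, and the vanishing of all positive $\ext^i_R(k,R)$ forces $R$ to be Gorenstein — indeed $\hm_R(k,R) \cong (0:_R\fm) = \fm^2 \cong k^{e-1}$ has dimension $e-1$ over $k$, and $k$ totally reflexive would require $\hm_R(k,R)$ to be reflexive with vanishing higher $\ext$ into $R$ as well, which cascades to $\operatorname{id}_R R < \infty$, i.e. $R$ Gorenstein, contradicting the hypothesis. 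Alternatively, and more self-containedly, one can use Theorem \ref{YosThm}(c): if $k$ were totally reflexive, its minimal number of generators is $n = 1$, so $\length(k) = e$; but $\length(k) = 1$, forcing $e = 1$, which makes $R$ a principal-ideal Artinian local ring, hence Gorenstein — again a contradiction.

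I expect the main obstacle to be pinning down cleanly which previously-available fact to lean on so that the argument is both short and rigorous; the cleanest route is the length count from Theorem \ref{YosThm}(c) applied to $k$, since it is entirely elementary given that theorem, requires no injective dimension machinery, and immediately yields $e = 1 = \length(\fm/\fm^2)$, contradicting that $R$ is non-Gorenstein with $\fm^2 \neq 0$ (a local Artinian ring with $e = 1$ is a quotient of a DVR, hence Gorenstein). Once the case $M' = k$ is settled, the reduction at the start — syzygies of totally reflexive modules are totally reflexive, and direct summands of totally reflexive modules are totally reflexive — closes the proof, since a totally reflexive $M$ would make $\Omega^R_n(M)$ totally reflexive and therefore its summand $k$ totally reflexive, which is impossible.
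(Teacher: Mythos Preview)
Your proposal is correct. Your route 1 is essentially the paper's argument, though the paper streamlines it: rather than invoking closure of totally reflexive modules under direct summands to conclude that $k$ itself is totally reflexive, the paper just uses that $\ext^i_R(\Omega^R_n(M),R)=0$ together with additivity of $\ext$ over direct sums to obtain $\ext^i_R(k,R)=0$ for all $i>0$, and then cites the standard characterization of Gorenstein rings. Your route 2 via the length formula of Theorem~\ref{YosThm}(c) is a genuinely different finish that the paper does not use; it trades the characterization ``$\ext^{>0}_R(k,R)=0 \Leftrightarrow R$ Gorenstein'' for the elementary observation that $\length(k)=1$ forces $e=1$, whence $R$ is a quotient of a DVR and hence Gorenstein. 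Both endings are short and valid; the paper's version is marginally leaner because it only needs the $\ext$-vanishing part of total reflexivity and the additivity of $\ext$, whereas your reduction relies on the (standard, but extra) fact that summands of totally reflexive modules are totally reflexive.
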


\begin{proof}
Assume that for some $R$-module $X,$ we have $\Omega^R_n(M)\cong k \oplus X$. Now suppose that $M$ is totally reflexive, and therefore, $\Omega^R_n(M)$ is totally reflexive as well since it is a syzygy of $M$. Then, for all $i>0,$ we have $\ext^i_R(\Omega^R_n(M),R)=0 $ and thus $\ext^i_R(k\oplus X,R)=0$ for all $i>0,$ which implies that $\ext^i_R(k,R)=0,$ for all $i>0$. This holds if and only if $R$ is Gorenstein. However, we assumed $R$ not to be Gorenstein, and therefore $M$ cannot be totally reflexive.
\end{proof}

\begin{definition}\label{TRfiltDefn} For a totally reflexive $R$-module $T$, a \emph{TR-filtration} of $T$ is a chain of submodules 
$$0 = T_0 \subset T_1 \subset \dots \subset T_{n-1} \subset T_n = T,$$
in which the following hold for all  $i=1, \dots, n:$
\begin{enumerate}[(i)]
\item $T_i$ is totally reflexive
\item $T_{i}/T_{i-1}$ is totally reflexive
\item $T_{i}/T_{i-1}$ contains no proper nonzero totally reflexive submodules.\\

\noindent We say a  filtration is a \emph{saturated TR-filtration} if, in addition, we have that 
\item $T_{i}/T_{i-1}$ is of minimal length among the totally reflexive $R$-modules.  
\end{enumerate}

\end{definition}
In \cite{HenSega} the elements of the ring which play a vital role in the construction of these filtrations are defined as follows.

\begin{definition}\label{defnexpair}
For a commutative ring $A$, a non-unit $a \in A$ is said to be an \emph {exact zero divisor} if there exists $b\in A$ such that $(0: a)= (b)$  and $(0: b)= (a).$ If $A$ is local then $b$ is unique up to unit, and we call $(a,b)$ an \emph {exact pair of  zero divisors}.\\

This is equivalent to the existence of a free resolution of $A/(a)$ of the form
$$ \cdots \rightarrow A \xrightarrow{[b]}A \xrightarrow{[a]}A \xrightarrow{[b]}A \xrightarrow{[a]}A \rightarrow 0$$

\begin{remark} \label{TAexact}
The complex in Definition \ref{defnexpair} is  totally acyclic and thus the modules $A/(a)$ and $A/(b)$ are totally reflexive. In fact, in certain cases the reverse it also true.

\end{remark}

\end{definition} 

\begin{lemma} \label{expairisTR} Let $(R, \fm)$ be a local ring with $\fm^3=0 \neq\fm^2,$ and let $a\in R.$ If $R/(a)$ is a totally reflexive module, then $a$ is an exact zero divisor. 

\end{lemma}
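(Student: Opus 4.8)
The plan is to reconstruct the minimal free resolution of $M:=R/(a)$ and to recognize in it the $2$-periodic complex from Definition~\ref{defnexpair}. First I would dispose of the degenerate cases: if $a$ is a unit then $R/(a)=0$, and if $a=0$ then $(0:a)=R=(1)$ while $(0:1)=(0)=(a)$, so in either case the conclusion holds trivially. So assume $a\in\fm\setminus\{0\}$; then $M$ is a nonzero, nonfree, cyclic module which is totally reflexive by hypothesis, so $R$ admits a nontrivial totally reflexive module and Theorem~\ref{YosThm} applies. Since $M$ is generated by one element, part (d) of that theorem provides a minimal free resolution
\[
\cdots \xrightarrow{d_3} R \xrightarrow{d_2} R \xrightarrow{d_1} R \longrightarrow M \longrightarrow 0
\]
in which every free module equals $R$. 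By minimality each $d_i$ is multiplication by an element of $\fm$, and every syzygy $\Omega^i(M)$ with $i\geq 1$ is then a cyclic submodule of $R$. Writing $d_1=[a]$ (its image is $\Omega^1(M)=(a)$), $d_2=[b]$, and $d_3=[c]$, exactness of the resolution reads
\[
(0:a)=\ker d_1=\operatorname{im} d_2=(b),\qquad (0:b)=\ker d_2=\operatorname{im} d_3=(c).
\]
Thus $(0:a)$ is already principal, generated by $b$, and it remains only to prove $(0:b)=(a)$, i.e.\ that $(c)=(a)$.

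For this I would count lengths. Both $\Omega^1(M)=(a)$ and $\Omega^3(M)=(c)$ are nonfree totally reflexive modules generated by a single element, so part (c) of Theorem~\ref{YosThm} gives $\length((a))=\length((c))=e$, where $e$ is the embedding dimension of $R$. On the other hand $b\in(0:a)$ forces $ab=0$, so $a\in(0:b)=(c)$ and hence $(a)\subseteq(c)$. Two submodules of $R$ of the same finite length, one contained in the other, must coincide, so $(a)=(c)=(0:b)$. Combined with $(0:a)=(b)$, this says precisely that $(a,b)$ is an exact pair of zero divisors, so $a$ is an exact zero divisor.

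I do not expect a serious obstacle once Theorem~\ref{YosThm} is invoked: the real content is the observation that over such a ring a cyclic totally reflexive module has a minimal free resolution whose every free module is $R$ --- so that each successive syzygy is a single colon ideal $(0:-)$ --- together with the length bookkeeping that identifies $\Omega^3(M)$ with $\Omega^1(M)$. The two points needing a little care are checking that Theorem~\ref{YosThm} is genuinely applicable (it is, since $M$ itself is a nontrivial totally reflexive module over $R$) and not overlooking the degenerate values of $a$; no delicate estimate enters.
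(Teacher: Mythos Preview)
Your argument is correct. Both you and the paper begin identically: invoke Theorem~\ref{YosThm}(d) to see that the minimal free resolution of $R/(a)$ has $R$ in every degree, so writing the differentials as multiplication by $a,b,c,\ldots$ one immediately gets $(0:a)=(b)$. The divergence is in the second half. The paper obtains $(0:b)=(a)$ by dualizing: since $R/(a)$ is totally reflexive, $\hm_R(\FF,R)$ is exact, and exactness at the spot $R\xrightarrow{a}R\xrightarrow{b}R$ in the dual reads $\operatorname{im}(a)=\ker(b)$, i.e.\ $(a)=(0:b)$. You instead compare lengths: $(a)=\Omega^1(M)$ and $(c)=\Omega^3(M)$ are both cyclic nonfree totally reflexive modules, hence of length $e$ by Theorem~\ref{YosThm}(c), and the trivial containment $(a)\subseteq(0:b)=(c)$ forces equality.

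Both routes are short; the paper's has the virtue of using the very definition of total reflexivity (exactness of the dual) and nothing else beyond the shape of the resolution, while yours trades that for the numerical consequence~\ref{YosThm}(c). One small point worth making explicit in your write-up is why Theorem~\ref{YosThm}(c) applies to the syzygies $(a)$ and $(c)$ and not only to the originally posited module $T$; in Yoshino's paper the length formula indeed holds for every nontrivial totally reflexive module once one such module exists, so this is fine, but a reader comparing with the statement as quoted here might pause.
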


\begin{proof}
Let $R/(a)$ be a totally reflexive module, and so by Theorem \ref{BTthm} it has a free resolution of the form
$$\FF: \qquad \cdots \rightarrow R \xrightarrow{b_2}R \xrightarrow{b_1}R \xrightarrow{a} R \rightarrow 0.$$
This implies that $(0:a)=(b_1).$ Apply the $\hm$ functor, we obtain the complex
$$\hm_R(\FF,R): 0 \rightarrow \hm_R(R/(a),R) \xrightarrow{}R \xrightarrow{a^*}R \xrightarrow{b^*_1} R \rightarrow \cdots,$$
 which is  exact because $R/(a)$ is totally reflexive. This is isomorphic to 
$$ 0 \rightarrow (R/(a))^* \xrightarrow{}R \xrightarrow{a}R \xrightarrow{b_1} R \rightarrow \cdots.$$
Therefore, $\ker(a)=(b_1)$ and we have that $(0:b_1)=(a).$  Hence $(a,b_1)$ are an exact pair of zero divisors.
\end{proof}

\begin{lemma}\label {lem1} Let $(R,\fm)$ be a %commutative local 
non-Gorenstein ring with $\fm^3=0\neq\fm^2$ and embedding dimension $e.$  If a totally reflexive $R$-module $T$ has a minimal presentation matrix  that contains a row with only one nonzero entry, then there exists a totally reflexive submodule $U \subset T$ such that length $(U) = $ length $(T) -e.$ \\
\end{lemma}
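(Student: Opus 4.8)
The plan is to exploit the hypothesis directly: if $T$ has a minimal presentation matrix $\mathbf{T}$ containing a row with a single nonzero entry, then after row and column operations we may assume that entry sits in the top-left corner, so $\mathbf{T}$ has the block form $\left[\begin{smallmatrix} a & 0 \\ * & \mathbf{T}'\end{smallmatrix}\right]$ where $a \in \fm$ and the first row is $(a, 0, \dots, 0)$. The first task is to argue that $a$ cannot lie in $\fm^2$: by Lemma \ref{copyk} a presentation matrix with a column in $\fm^2 R^{b_0}$ would force a copy of $k$ into $\Omega^R_1(T)$, and then Proposition \ref{ksumcor} would contradict total reflexivity of $T$. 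A short argument along these lines (applied to a suitable column, or to the $1\times 1$ corner block after splitting off) should show $a \notin \fm^2$, hence $a$ is part of a minimal generating set of $\fm$.

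Next I would extract the submodule. The idea is that the first column of $\mathbf{T}$, namely $(a, *, \dots, *)^{\mathrm{tr}}$, generates a cyclic submodule, and more to the point the quotient of $T$ by the submodule $U$ generated by all columns \emph{except} the first should be $R/(a)$ — or dually, the submodule $U \subset T$ whose presentation is built from $\mathbf{T}'$ (together with the tail of the first column) should fit into a short exact sequence $0 \to U \to T \to R/(a) \to 0$. Since $a \notin \fm^2$ and $R/(a)$ is to be totally reflexive, I would invoke Lemma \ref{expairisTR}: provided $R/(a)$ is totally reflexive, $a$ is an exact zero divisor, and by Theorem \ref{YosThm}(c) the length of $R/(a) = R/(a)$ is exactly $e$ (it has one generator, so $\length(R/(a)) = 1\cdot e = e$). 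One must of course first establish that $R/(a)$ \emph{is} totally reflexive; this should follow from the short exact sequence together with $T$ being totally reflexive and $a \notin \fm^2$, using that the class of totally reflexive modules is closed under the relevant operations, or by producing the complete resolution directly from the upper-triangular shape of $\mathbf{T}$. Then additivity of length on the short exact sequence gives $\length(U) = \length(T) - \length(R/(a)) = \length(T) - e$, and $U$ is totally reflexive as the kernel term in a short exact sequence of totally reflexive modules (equivalently, as a syzygy-type object), completing the proof.

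The main obstacle I anticipate is making the short exact sequence $0 \to U \to T \to R/(a) \to 0$ rigorous and verifying that its left-hand term $U$ (presented by the submatrix obtained from $\mathbf{T}$ by deleting the first row and column, up to the interaction with the leftover first-column entries) is genuinely totally reflexive — the bookkeeping of presentation matrices under passing to submodules and quotients, and checking the dual complex stays acyclic, is where the real work lies. A secondary subtlety is confirming $R/(a)$ is totally reflexive rather than merely a homomorphic image of $T$: one wants to see that the "staircase" structure of the presentation matrix produces, on the nose, a complete resolution $\cdots \to R \xrightarrow{b} R \xrightarrow{a} R \xrightarrow{b} R \to 0$ of $R/(a)$, at which point Lemma \ref{expairisTR} and Theorem \ref{YosThm}(c) finish the length count cleanly.
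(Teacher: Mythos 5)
Your overall strategy --- peel off a cyclic quotient $R/(a)$ corresponding to the distinguished row, show $R/(a)$ is totally reflexive of length $e$, and conclude that the kernel $U$ is totally reflexive with $\length(U)=\length(T)-e$ --- is the same as the paper's, and the supporting results you name (Lemma \ref{copyk}, Proposition \ref{ksumcor}, Lemma \ref{expairisTR}, Theorem \ref{YosThm}(c)) are the right cast. But the step you defer, total reflexivity of $R/(a)$, is the crux of the lemma, and the one concrete mechanism you offer for it fails. You cannot deduce that $R/(a)$ is totally reflexive ``from the short exact sequence together with $T$ being totally reflexive'': in $0\to U\to T\to R/(a)\to 0$ the valid two-out-of-three statement runs the other way (if the quotient and the middle term are totally reflexive, so is the submodule; a quotient of a totally reflexive module by a totally reflexive submodule need not be totally reflexive), and you cannot use $U$ as the second known term since its total reflexivity is exactly what you are trying to prove. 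The missing ingredient is Theorem \ref{BTcite}: the complete resolution of $T$ is a totally acyclic complex of free modules in which the differential $\mathbf{T}$ has a row whose only entry outside $\fm^2$ is $a$ (your observation that $a\notin\fm^2$ is needed here, but note Lemma \ref{copyk} concerns \emph{columns}, so it must be applied to the dual complex, i.e.\ to $\mathbf{T}^{\mathrm{tr}}$, before Proposition \ref{ksumcor} gives the contradiction), and that theorem then says outright that $a$ is an exact zero divisor, so $R/(a)$ is totally reflexive by Remark \ref{TAexact}. Only then does the two-out-of-three argument hand you $U$.

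The second gap is the sequence $0\to U\to T\to R/(a)\to 0$ itself, which you correctly flag as ``where the real work lies'' but do not carry out. The paper first normalizes the syzygy matrix $\mathbf{W}$ of $T$: since $\mathbf{T}\mathbf{W}=0$, the entries of the corresponding row of $\mathbf{W}$ lie in $(0:a)=(s)$, and Lemma \ref{copyk} (again via the dual) forces one of them to be $s$ up to a unit, so column operations put $\mathbf{W}$ into the same one-nonzero-entry-row shape. This yields a short exact sequence of two-term complexes $0\to\GG'\to\GG\to\GG''\to 0$, and the long exact homology sequence, together with exactness of $R\xrightarrow{s}R\xrightarrow{a}R$, identifies $U=\coker\mathbf{T}'$ as a submodule of $T$ with quotient $R/(a)$. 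Without this normalization of $\mathbf{W}$, it is not clear that the submodule generated by the remaining $n-1$ generators is actually presented by the deleted matrix $\mathbf{T}'$, so the identification of $U$ and hence the length count are not yet justified.
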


\begin{proof}
 Let 
$$\FF: \qquad \cdots \rightarrow F_2\xrightarrow{\mathbf{W}} F_1 \xrightarrow{\mathbf{T}} F_0 \rightarrow F_{-1} \rightarrow \cdots$$
be a complete free resolution of $T,$ where $\mathbf{W}=(\omega_{ij})$ is a presentation matrix of $\Omega^R_1(T)$ and let
 $$\mathbf{T}=\left[ \begin{array}{cccc}
t_{11} &\cdots &     & t_{1n} \\
\vdots  &          &     &\vdots\\
t_{n-1 1} &\cdots &      & t_{n-1 n}\\
0&      \cdots     &0  &  t_{nn}\\
\end{array} \right] $$

\noindent be a presentation matrix  of $T.$
From Theorem  \ref{BTcite}  we know that $t_{nn}$ is an exact zero divisor of $R.$ Let $s$ generate $(0:t_{nn}).$ Since $\mathbf{TW}=0,$ we have that $t_{nn}\omega_{n j}=0 $ for all $j=1, \dots , n$. Thus, the ideal $(\omega_{n1}, \dots , \omega_{nn}) \subset (0: t_{nn})=(s)$. This, with Lemma \ref{copyk}, implies that for some $i$ we have $\omega_{ni} = s,$ up to units. Therefore, every entry in the $n$th row of $\mathbf{W}$ is a multiple of $s$. We can apply column operations to $\mathbf{W}$ to assume that
$$ \mathbf{W} = \left[ \begin{array}{cccc}
w_{11} &\cdots && w_{1n} \\
\vdots&&& \vdots\\
0& \cdots & 0 & s
\end{array} \right]$$
 is another presentation matrix of $\Omega^R_1(T)$. 
 
 Define $\textbf{W'} $ to be the $(n-1) $ by $(n-1)$  matrix obtained  by deleting the $n$th row and  $n$th column from $\textbf{W}. $ Similarly, we define $ \textbf{T'} $ to be the $(n-1) $ by $(n-1)$  matrix obtained  by deleting the $n$th row and  $n$th column from $\textbf{T}. $  Note that $\mathbf{W'T'}=0,$ and now consider the following commutative diagram: 

\[
\xymatrixrowsep{1.7pc}\xymatrixcolsep{2.1pc}\xymatrix{
 & 0 \ar@{->}[d]^{\qquad}& &  0 \ar@{->}[d]^{\qquad}&& 0 \ar@{->}[d] ^{\qquad}&\\
\GG':& R^{n-1 } \ar@{->}[rr]^{ \mathbf{W'}} & &  R^{n-1} \ar@{->}[rr]^{ \mathbf{T'}} & &  R^{n-1} \ar@{->}[r] &0\\
\\
\GG:&R^n  \ar@{->}[rr]^{\mathbf{W}} \ar@{<-}[uu]^q  && R^n \ar@{->}[rr]^{\mathbf{T}} \ar@{<-}[uu]^q  && R^n \ar@{->}[r] \ar@{<-}[uu]^q &0\\
\\
\GG'':&R \ar@{->}[rr]^{[w]} \ar@{<-}[uu]^p  && R \ar@{->}[rr]^{[t_{nn}]} \ar@{<-}[uu]^p  && R \ar@{->}[r] \ar@{<-}[uu]^p&0\\
                & 0 \ar@{<-}[u]                        & &  0 \ar@{<-}[u]                            && 0 \ar@{<-}[u] &\\
                &&&&&&\\
}
\]

\noindent where $q:=\left[ \begin{array}{ccc} 1& & 0\\ &\ddots & \\ 0& &1\\ 0& \dots &0 \end{array} \right]$ and $p:= \left[\begin{array}{cccc} 0 & \ldots &0 & 1\end {array}\right].$ Thus we have an exact sequence of complexes
$$ 0 \rightarrow \GG ' \rightarrow \GG \rightarrow \GG '' \rightarrow 0$$  
which yields the following long exact sequence of homology.  
$$ \cdots \rightarrow \HH_1(\GG '') \rightarrow \HH_0(\GG ') \rightarrow \HH_0(\GG) \rightarrow \HH_0(\GG '') \rightarrow 0.$$

\noindent Note that $\HH_1(\GG '') =0$ since $ R\stackrel{\left[ w \right]}{\longrightarrow} R \stackrel{\left[t_{nn}\right]}{\longrightarrow}  R$ is exact. %Let
% $$\mathbf{U}:=\left[ \begin{array}{ccc}
%t_{11} &\cdots &     t_{1n-1} \\
%\vdots  &          &     \vdots\\
%t_{n-1 1} &\cdots &      t_{n-1 n-1}\\
%\end{array} \right], $$
 Let $\coker \mathbf{T'} $ and so  $ U \subset T$. Therefore,  $\HH_0(\GG '')=R/ (t_{nn}) \cong T/U$ and we have the short exact sequence 
$$ 0 \rightarrow U \rightarrow T \rightarrow T/U \rightarrow 0. $$

 To see that $U$ is in fact totally reflexive, note that $T/U \cong R/(t_{nn})$ is totally reflexive since $(t_{nn})$ is an exact zero divisor. This, along with the fact that $T$ is totally reflexive, implies that $U$ is as well, see \cite[Corollary 4.3.5]{GorDim}. From Yoshino's theorem, here listed as Theorem \ref{YosThm}, part (\ref{YosThmlength}), we know that the length of $R/ (t_{nn})$ is the number of its minimal generators  times the embedding dimension. Therefore, $\length(R/ (t_{nn}))=e$ and thus $\length (U)=\length (T) -e.$
\end{proof}

\begin{theorem}\label {thm1} Let $(R,\fm)$ be a non-Gorenstein ring with $\fm^3=0\neq \fm^2 $ that contains exact zero divisors, and suppose that $T$ is a totally reflexive $R$-module. There exists a saturated TR-filtration of $T$ if and only if $T$ has an upper triangular presentation matrix.
\end{theorem}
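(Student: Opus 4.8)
The plan is to prove both directions by induction on the minimal number of generators $n=\mu(T)$. A preliminary reduction lets me assume $T$ has no nonzero free direct summand: a free summand $R^j$ contributes only zero rows to a presentation matrix and, using an exact pair of zero divisors $(a,b)$ (available by hypothesis, with $(a)\cong R/(b)$ and $R/(a)$ totally reflexive of length $e$ by Remark \ref{TAexact} and Theorem \ref{YosThm}), contributes only the steps $0\subset(a)\subset R$ to a filtration, so neither property is disturbed. For free-summand-free $T$, Theorem \ref{YosThm} gives $\length T=ne$ and makes every minimal presentation matrix of $T$ a square $n\times n$ matrix, and the base case $n=0$ is vacuous. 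One fact is used in both directions: applying $\hm_R(-,R)$ to the $2$-periodic resolution of $R/(a)$ for an exact zero divisor $a$ gives $\ext^1_R(R/(a),R)=0$, so in any short exact sequence $0\to N\to E\to R/(a)\to 0$ each free direct summand of $N$ splits off $E$; in particular, if $E$ has no nonzero free summand, then neither does $N$.

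Suppose first that $T$ has an ($n\times n$) upper triangular presentation matrix $\mathbf{T}$. Its bottom row is $(0,\dots,0,t_{nn})$ with $t_{nn}\neq 0$, since a zero bottom row would split a free summand off $T$; thus $\mathbf{T}$ has a row with a single nonzero entry and Lemma \ref{lem1} applies. Its proof exhibits a totally reflexive submodule $U=\coker\mathbf{T}'\subset T$, where $\mathbf{T}'$ is $\mathbf{T}$ with the last row and column struck out — still upper triangular — with $\length U=(n-1)e$ and $T/U\cong R/(t_{nn})$, $t_{nn}$ an exact zero divisor. By the $\ext^1$ fact, $U$ has no free summand, so $\mu(U)=n-1$ and $\mathbf{T}'$ is a minimal presentation matrix of $U$; by induction $U$ has a saturated TR-filtration $0=T_0\subset\dots\subset T_{n-1}=U$. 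I claim $0=T_0\subset\dots\subset T_{n-1}\subset T_n:=T$ is a saturated TR-filtration of $T$: the one new quotient $T/U\cong R/(t_{nn})$ is totally reflexive of length $e$, which is the minimal length of a nonzero totally reflexive module by Theorem \ref{YosThm}(c), and — being cyclic of length $e$ — has no proper nonzero totally reflexive submodule, since any such submodule would have length a positive multiple of $e$ and hence be everything. So Definition \ref{TRfiltDefn}(i)--(iv) all hold.

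Conversely, let $0=T_0\subset\dots\subset T_n=T$ be a saturated TR-filtration. The top quotient $T/T_{n-1}$ is totally reflexive of minimal length $e$, hence cyclic (Theorem \ref{YosThm}), so $T/T_{n-1}\cong R/(a)$ with $a\in\fm\setminus\fm^2$ — the inclusion $a\notin\fm^2$ holds because otherwise Lemma \ref{copyk} and Proposition \ref{ksumcor} would contradict total reflexivity of $R/(a)$ — and $a$ is an exact zero divisor by Lemma \ref{expairisTR}. The $\ext^1$ fact applied to $0\to T_{n-1}\to T\to R/(a)\to 0$ forces $T_{n-1}$ to be free-summand-free, so $\mu(T_{n-1})=n-1$, the chain $0=T_0\subset\dots\subset T_{n-1}$ is a saturated TR-filtration of $T_{n-1}$, and by induction $T_{n-1}$ has an $(n-1)\times(n-1)$ upper triangular minimal presentation matrix $\mathbf{T}'$. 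The Horseshoe Lemma applied to $0\to T_{n-1}\to T\to R/(a)\to 0$, with the minimal resolution of $T_{n-1}$ (first differential $\mathbf{T}'$) and the $2$-periodic resolution of $R/(a)$ (first differential $[a]$), builds a free resolution of $T$ whose first differential has the block shape $\left(\begin{smallmatrix}\mathbf{T}'&\ast\\ 0& a\end{smallmatrix}\right)$ — upper triangular.

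The one subtlety, and the main obstacle, is that this Horseshoe resolution must be minimal, i.e.\ the displayed matrix must be a \emph{minimal} presentation matrix of $T$, not merely a presentation matrix. This holds once $\mu(T)=(n-1)+1=\mu(T_{n-1})+1$: then the degree-zero term $R^{n-1}\oplus R=R^n$ of the Horseshoe resolution already carries a minimal generating set of $T$, so $\ker(R^n\twoheadrightarrow T)=\Omega^1_R(T)\subseteq\fm R^n$, which puts every column — in particular every entry — of the first differential $\left(\begin{smallmatrix}\mathbf{T}'&\ast\\ 0& a\end{smallmatrix}\right)$ into $\fm$. And $\mu(T)=\mu(T_{n-1})+1$ is precisely what the free-summand bookkeeping secures: if instead the connecting homomorphism in the long exact homology sequence of $-\otimes_R k$ applied to the top step were nonzero, chasing it through $\ext^1_R(R/(a),R)=0$ would produce a free summand of $T$, against the reduction. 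This closes the induction.
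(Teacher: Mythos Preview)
Your proof is correct and follows essentially the same strategy as the paper's: induction on the number of generators, using Lemma~\ref{lem1} to peel off a cyclic totally reflexive quotient in one direction and the Horseshoe Lemma to assemble a block upper-triangular presentation in the other. You are more careful than the paper on two points it glosses over---the reduction to modules without free summands and the verification that the Horseshoe presentation is actually \emph{minimal}---and your use of $\ext^1_R(R/(a),R)=0$ to prevent free summands from appearing in $T_{n-1}$ is a clean device the paper does not make explicit.
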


\begin{proof}
Let $T$ be totally reflexive $R$-module with minimal number of generators $\mu(T)=n$ and suppose there exists a saturated TR-filtering of $T$. To show that $T$ has an upper triangular presentation matrix we will use induction on $\mu(T)$. If $\mu(T)=1,$ then any presentation matrix  of $T$ would be of size $ 1 \times 1$ and thus trivially upper triangular.

 Assuming true for $\mu(T)=n,$  consider the case $\mu(T)=n+1$. By Lemma \ref{lem1} there exists a totally reflexive $R$-module $M \subset T$ such that the sequence 
$$ 0 \rightarrow M \rightarrow T \rightarrow T/M \rightarrow 0$$
is exact and $T/M$ is cyclic. Define $N:= T/M$ and let  $\FF $ and $\GG$  be free resolutions of $M$ and $N$ respectively. Since $R$ is local with $\fm^3=0,$ we can assume that $F_i=R^n$  and $G_i=R,$ for all $i\geq 0$, \cite{Acyclic}.  By applying the Horseshoe Lemma, we get the following diagram:
\begin{equation}\label{preset}
\xymatrixrowsep{3pc}\xymatrixcolsep{3pc}\xymatrix{
0\ar@{->}[r] &R^n \ar@{->}[r] \ar@{->}[d]^{d_1^M}& R^n\oplus R \ar@{->}[r]\ar@ {->}[d]^{d_1^T} & R\ar@ {->}[r]\ar@ {->}[d]^{d_1^{N}}\ar@ {->}[dl]^{\sigma_1}& 0\\
0\ar@{->}[r] &R^n \ar@{->}[r] \ar@{->}[d]^{\epsilon_M}& R^n\oplus R \ar@{->}[r]\ar@ {->}[d]^{\epsilon_T} & R\ar@ {->}[r]\ar@ {->}[d]^{\epsilon_N}\ar@ {->}[dl]^{\sigma_0}& 0\\
0 \ar@{->}[r] & M  \ar@{->}[r] & T \ar@{->}[r]& N \ar@{->}[r]&0\\
&0\ar@{<-}[u]&0\ar@{<-}[u]&0\ar@{<-}[u]&
}
\end{equation}

\noindent where $d_1^T (f,g) \mapsto d_1^M(f) + \sigma_1(g)$. Since $d_1^M(f) \in R^n$ and  $\sigma_1 (g) \in R^n\oplus R,$  when we consider them as columns in a presentation matrix of $T$ we have the matrix
\begin{equation}\label{prest} \left[\begin{array}{cc} d_1^M& f'\\ 0 & g' \end{array} \right], \end{equation}
 where $\sigma_1(g)=f'+g'$ for some $f' \in R^n$ and $g' \in R,$ we see that it is upper triangular. This matrix is a presentation matrix of $T$. By induction, the matrix representing $d_1^M(f)$ can be taken to be upper triangular and therefore (\ref{prest}) is upper triangular. \\

Now suppose $T$ has an upper triangular presentation matrix, say
$$\left[ \begin{array}{ccc}
t_{11} &\cdots      & t_{1n} \\
 & \ddots         &\vdots\\
0&       &  t_{nn}
\end{array} \right] .$$
Again we will use induction on the minimal number of generators.  If $n=2,$ then
by Lemma \ref{lem1} there exists  a totally reflexive $R$-module $T_1\cong R/(t_{11})$ which is a submodule of $T.$ Thus, $T$ has a saturated TR-filtration  
 $$0=T_0 \subset T_1  \subset T_{2}=T$$
 with $T_2/T_1\cong R/(t_{22}).$
 Now assume that an $n$-generated totally reflexive module with an upper triangular presentation matrix has a saturated TR-filtration  
 $$0=T_0 \subset T_1 \subset \dots. \subset T_{n-1} \subset T_{n}$$
 with $T_i/T_{i-1} $ being one generated and totally reflexive for $i=1,\dots, n.$
 Consider an $(n+1)$-generated totally reflexive module $T,$ which has with an upper triangular presentation matrix. By Lemma \ref{lem1}, there exists a totally reflexive module $T_{n}$ such that $T_n$ is a submodule of $T$ and has  presentation matrix of the form
 $$\left[ \begin{array}{ccc}
 t_{11} &\cdots      & t_{1 n} \\
 & \ddots         &\vdots\\
0&       &  t_{n n}
\end{array} \right]$$
with $T/T_n \cong R/t_{n+1\,n+1}.$  This, combined with the induction hypothesis, shows that $T$ has a saturated TR-filtration.
\end{proof}

\begin{definition}
An \emph{upper triangular complex} is a complex in which every differential can simultaneously be represented by a square matrix $(a_{ij})$ such that $a_{ij}=0$ when $i>j.$

\end{definition}

\begin{corollary}\label{UTres} Let $(R,\fm)$ be a non-Gorenstein ring with $\fm^3=0\neq \fm^2 $ that contains exact zero divisors. If $T$ is a totally reflexive $R$-module that has an upper triangular minimal presentation matrix, then it has an upper triangular minimal complete resolution. 
\end{corollary}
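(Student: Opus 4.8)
The plan is to induct on $\mu(T)$, the minimal number of generators of $T$: at each step I would peel off a cyclic totally reflexive quotient of $T$ and reassemble a complete resolution from those of the two pieces in a way that preserves upper triangularity. The base case $\mu(T)=1$ is immediate. Then $T\cong R/(t)$; by Lemma~\ref{expairisTR} the element $t$ is an exact zero divisor, so the associated periodic totally acyclic complex of Definition~\ref{defnexpair} (cf.\ Remark~\ref{TAexact}) is a minimal complete resolution of $T$, and all of its $1\times1$ differentials are trivially upper triangular.

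For the inductive step, suppose $\mu(T)=n+1$ and that $T$ has an upper triangular minimal presentation matrix. By Theorem~\ref{thm1} there is a saturated TR-filtration of $T$; concretely, the totally reflexive submodule produced by Lemma~\ref{lem1} and used in the proof of Theorem~\ref{thm1} yields a short exact sequence $0\to M\to T\to N\to 0$ of totally reflexive modules in which $N=R/(t)$ for an exact zero divisor $t$, $\mu(M)=n$, and $M$ carries an $n\times n$ upper triangular minimal presentation matrix. By the induction hypothesis $M$ has an upper triangular minimal complete resolution $\mathbb{F}^M$, every free module of which is $\cong R^n$ by Theorem~\ref{YosThm}(\ref{YosThmReso}); by the base case $N$ has its periodic $1\times1$ complete resolution $\mathbb{F}^N$. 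The key step is to build from these a complete resolution $\mathbb{F}^T$ of $T$ fitting in a degreewise split short exact sequence of complexes $0\to\mathbb{F}^M\to\mathbb{F}^T\to\mathbb{F}^N\to 0$. I would do this by a ``complete Horseshoe'' argument: apply the ordinary Horseshoe lemma to $0\to M\to T\to N\to0$ to build the nonnegative half of $\mathbb{F}^T$; apply it again to the $\hm_R(-,R)$-dual sequence $0\to N^*\to T^*\to M^*\to0$ (exact because $N$ is totally reflexive) and then dualize to build the negative half; and splice the two halves at $T$, which is legitimate because the Horseshoe construction is compatible with augmentations and because --- $M$, $N$, $T$ being non-free here --- the negative half of a minimal complete resolution is exactly the $\hm_R(-,R)$-dual of a minimal free resolution of the dual module. (This is also an instance of the general fact that a short exact sequence of totally reflexive modules lifts to a distinguished triangle of complete resolutions.)

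Granting such an $\mathbb{F}^T$, the rest is bookkeeping. Degreewise splitness gives $F^T_i\cong F^M_i\oplus F^N_i=R^n\oplus R=R^{n+1}$ for every $i$, and --- listing the summand coming from $M$ first on both halves --- each differential of $\mathbb{F}^T$ takes the block form $\left[\begin{smallmatrix}\partial^M_i&\ast\\0&\partial^N_i\end{smallmatrix}\right]$. Since $\partial^M_i$ is upper triangular by the induction hypothesis and $\partial^N_i$ is $1\times1$, every differential of $\mathbb{F}^T$ is upper triangular. Finally $\mathbb{F}^T$ is totally acyclic with $F^T_i\cong R^{n+1}=R^{\mu(T)}$ for all $i$; since $T$ is non-free, Theorem~\ref{YosThm}(\ref{YosThmReso}) forces $\mathbb{F}^T$ to be minimal, because splitting off a nonzero trivial subcomplex would leave a complete resolution of $T$ having a free module of rank strictly less than $\mu(T)$. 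Hence $\mathbb{F}^T$ is the sought upper triangular minimal complete resolution.

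The step I expect to be the main obstacle is the complete-resolution Horseshoe lemma, and inside it the verification that the negative half comes out \emph{upper}, not lower, triangular: dualizing a block upper triangular matrix produces a block lower triangular one, so it is only after reversing the ordering of each free basis --- equivalently, consistently keeping the summand that resolves $M$ ``first'' on both halves --- that upper triangularity is recovered and made to match the convention used on the nonnegative half, so that the splicing differential $F^T_0\to F^T_{-1}$ is itself upper triangular. Verifying minimality, on the other hand, costs nothing extra once one argues by constant rank via Theorem~\ref{YosThm}(\ref{YosThmReso}) instead of inspecting the connecting maps entrywise.
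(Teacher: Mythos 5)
Your proposal is correct and follows essentially the same route as the paper: induction on the number of minimal generators, peeling off the cyclic totally reflexive quotient coming from the saturated TR-filtration of Theorem~\ref{thm1}, and assembling each differential in block upper triangular form via the Horseshoe lemma. The only real difference is that you make explicit two points the paper leaves implicit --- constructing the negative half of the complete resolution by dualizing and re-splicing, and verifying minimality by a rank count against Theorem~\ref{YosThm} --- whereas the paper simply iterates the syzygy argument.
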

\begin{proof} This will be done by induction on the number of minimal generators. Let $T$ be a totally reflexive $R$-module that has an upper triangular minimal presentation matrix with $\mu(T)=n.$ and thus the matrix is of size $n \times n.$ Let $\FF$ be a  free resolution of $T.$  By \cite[Theorem B]{Acyclic}, we have that $\rightarrownk_R F_1=n$ for all $i \in \ZZ.$ If $\mu(T)=1,$ then the free resolution of $T$ is trivially upper triangular. \\

Assume for $\mu(T)=n$ that $\FF $ is an upper triangular minimal free resolution and consider the case when $T$ has a presentation matrix of the form
$$\left[ \begin{array}{ccc}
 t_{11} &\cdots      & t_{1 \,n+1} \\
 & \ddots         &\vdots\\
0&       &  t_{n+1 \, n+1}
\end{array} \right]. $$
By Theorem \ref{thm1}, there exists an saturated TR-filtration and thus the short exact sequence
$$ 0 \rightarrow T_{n} \rightarrow T \rightarrow R/(t_{n+1\, n+1})\rightarrow 0.$$ 

From the Horseshoe Lemma, a diagram similar to (\ref{preset}) can be obtained and extended to include the first syzygies. By a similar argument to the proof of Theorem \ref{thm1}, we see that the first syzygy of $T$ has an upper triangular presentation matrix. Finally, any syzygy in a free resolution of totally reflexive module is also totally reflexive \cite{GorDim}. Therefore, we can apply this corollary to the $n$th syzygy to see that the $n+1$ the syzygy has an upper triangular presentation matrix. 
\end{proof}

\begin{corollary} \label{cor1} Let $(R,\fm)$ be a non-Gorenstein ring with $\fm^3=0\neq \fm^2 $ that contains exact zero divisors. For an $R$-module $M$ that has an upper triangular presentation matrix, the non-zero entries on the main diagonal of that presentation matrix are exact zero divisors if and only if $M$ is totally reflexive.
\end{corollary}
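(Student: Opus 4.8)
The plan is to prove both implications by reduction to the cyclic case and induction on $\mu(M)$, using the machinery already built. First, suppose $M$ is totally reflexive with an upper triangular presentation matrix $(t_{ij})$ of size $n\times n$. I would argue by induction on $n$. For $n=1$, $M\cong R/(t_{11})$ is totally reflexive, so Lemma \ref{expairisTR} gives that $t_{11}$ is an exact zero divisor. For the inductive step, apply Theorem \ref{thm1} (or directly Lemma \ref{lem1}, since an upper triangular matrix has a bottom row with a single nonzero entry $t_{nn}$) to obtain a totally reflexive submodule $U\subset M$ with $M/U\cong R/(t_{nn})$, where $U$ has the upper triangular presentation matrix obtained by deleting the last row and column. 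Since $M/U$ is totally reflexive, Lemma \ref{expairisTR} shows $t_{nn}$ is an exact zero divisor; since $U$ is totally reflexive (as in the proof of Lemma \ref{lem1}, using that both $M$ and $M/U$ are) with an $(n-1)\times(n-1)$ upper triangular presentation matrix, the induction hypothesis shows the remaining diagonal entries $t_{11},\dots,t_{n-1\,n-1}$ are exact zero divisors.

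For the converse, suppose $M$ has an upper triangular presentation matrix whose nonzero diagonal entries are all exact zero divisors; I want to conclude $M$ is totally reflexive. Again induct on $n$. Here I must first address whether \emph{every} diagonal entry is nonzero: if some $t_{ii}=0$, then that entry contributes nothing and a column/row manipulation reasoning (together with Lemma \ref{copyk} and Proposition \ref{ksumcor}) should rule this case out, or force a free summand; more carefully, a zero on the diagonal of an $n\times n$ matrix presenting an $n$-generated module over an Artinian ring would make a generator redundant or produce a copy of $k$ in a syzygy, contradicting total reflexivity by Proposition \ref{ksumcor} — so in fact all diagonal entries are nonzero exact zero divisors. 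Granting this, for $n=1$ we have $M\cong R/(t_{11})$ with $t_{11}$ an exact zero divisor, which is totally reflexive by Remark \ref{TAexact}. For the inductive step, the submodule $U$ generated by the columns coming from the top-left $(n-1)\times(n-1)$ block has upper triangular presentation matrix with exact zero divisor diagonal, hence is totally reflexive by induction, and $M/U\cong R/(t_{nn})$ is totally reflexive since $t_{nn}$ is an exact zero divisor. Then $M$ sits in a short exact sequence $0\to U\to M\to M/U\to 0$ with both ends totally reflexive, and the class of totally reflexive modules is closed under extensions \cite[Corollary 4.3.5 or 1.4.4]{GorDim}, so $M$ is totally reflexive.

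The main obstacle I anticipate is the bookkeeping in the converse direction, specifically justifying that the top-left block genuinely presents a \emph{submodule} $U$ of $M$ fitting into the stated short exact sequence (rather than merely some module with that presentation matrix), and handling possible zeros on the diagonal. The first point is handled exactly as in Lemma \ref{lem1}: the commutative diagram of complexes $0\to\GG'\to\GG\to\GG''\to 0$ with $\GG''$ the length-one-periodic complex on $[t_{nn}]$ shows $\coker$ of the deleted block is a submodule with quotient $R/(t_{nn})$, provided one knows $\GG'$ is exact, which in turn needs $\GG'$ to be (part of) a resolution — and this is where one invokes the inductive hypothesis that the smaller module $U$ is totally reflexive, hence has an acyclic resolution extending the block matrix. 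The zero-on-the-diagonal issue is the genuinely delicate part: one must show such a presentation cannot occur for a totally reflexive module, which I would do by noting that a zero diagonal entry lets one perform row operations producing either a unit entry (contradicting minimality, i.e.\ that we have a minimal presentation) or a column in $\fm^2 R^n$ (so Lemma \ref{copyk} and Proposition \ref{ksumcor} give a contradiction). The remaining steps — closure of total reflexivity under extensions and the base case — are immediate from the cited results.
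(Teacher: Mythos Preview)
Your approach is essentially the same as the paper's: both directions proceed by induction on $n$, peeling off the last row and column via the short exact sequence $0\to U\to M\to R/(t_{nn})\to 0$, invoking Lemma~\ref{expairisTR} in the forward direction and closure of total reflexivity under extensions in the converse. The paper phrases the forward direction through the saturated TR-filtration supplied by Theorem~\ref{thm1} and dispatches the converse in one line by pointing to ``the inductive part of the proof of Theorem~\ref{thm1}''; your write-up is more explicit but logically equivalent.

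Two of your anticipated obstacles are not genuine. First, to obtain $0\to U\to M\to R/(t_{nn})\to 0$ in the converse direction you do \emph{not} need $\GG'$ to already be a resolution: the long exact homology sequence from $0\to\GG'\to\GG\to\GG''\to 0$ only requires $\HH_1(\GG'')=0$, and this holds precisely because $t_{nn}$ is assumed to be an exact zero divisor (so $R\xrightarrow{s}R\xrightarrow{t_{nn}}R$ is exact at the middle). Thus $U=\coker\mathbf{T'}$ really is a submodule of $M$ with the stated quotient, independently of any inductive hypothesis. Second, your handling of a possible zero diagonal entry in the converse is circular: you appeal to Proposition~\ref{ksumcor} to contradict total reflexivity, but in that direction $M$ is not yet known to be totally reflexive. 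The statement (and the paper's proof) should simply be read as asserting that all diagonal entries are nonzero exact zero divisors; no separate argument is needed.
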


\begin{proof}
Both directions of this proof rely on the existence of a saturated TR-filtration. First, suppose $M$ is totally reflexive and has an upper triangular presentation matrix. Hence, we have a saturated TR-filtration
$$0 = M_0 \subset M_1 \subset \dots \subset M_{n-1} \subset M_n = M,$$ 
as well as short exact sequences
$$0\rightarrow M_{i-1} \rightarrow M_i\rightarrow M_{i}/M_{i-1}\rightarrow 0$$
for all $i=2, \cdots, n.$  If $i=2,$ then for some $u, v, \alpha \in R$ we have
$$0\rightarrow R/(u) \rightarrow \coker \left[\begin{array}{cc} u& \alpha \\ 0&v \end {array}\right]  \rightarrow R/(v)\rightarrow 0.$$ 
From Lemma \ref{expairisTR} both $u$ and $v$ are exact zero divisors. Assume this holds for $n-1,$ and consider
$$0\rightarrow M_{n-1} \rightarrow M_n\rightarrow M_{n}/M_{n-1}\rightarrow 0.$$
By the induction hypothesis, $M_{n-1}$ has a upper triangular presentation matrix with exact zero divisors on the main diagonal. If $\mathbf{M}=(a_{ij})$ is a presentation matrix of $M_n,$ then $M_{n}/M_{n-1}\cong R/(a_{nn}).$  Since $R/(a_{nn})$ is totally reflexive, $a_{nn}$ is an exact zero divisor. Therefore, for all $i,$ $a_{ii}$ are exact zero divisors.

 Now suppose $M$  has an upper triangular presentation matrix where the non-zero entries on the main diagonal are exact zero divisors. By the inductive part of the proof of Theorem \ref{thm1}, $M$ is totally reflexive.
\end{proof}

%%%%%%%%%%%%%%%%%%%%%%%%%%%%%%%%%%%%%%%%%%%%%%%%%%%%%%%%%%%%%

\section{Filtrations and Yoneda $\ext$}

Recall that the Yoneda definition \cite[Appendix A3]{Eisenbud} of $\ext_R^1(N,M)$  for two $R$-modules $M, N$ gives a correspondence between the elements of $\ext_R^1(N,M)$ and equivalence classes of exact sequences of the form
$$ 0 \rightarrow M  \rightarrow X  \rightarrow N \rightarrow 0.$$
If an $R$-module $T$ has a TR-filtration, 
$$0 = T_0 \subset T_1 \subset \dots \subset T_{n-1} \subset T_n = T,$$
 then there exist short exact sequences
$$0\rightarrow T_{i-1} \rightarrow T_i \rightarrow T_i/T_{i-1} \rightarrow 0$$
and hence an element in $\ext_R^1(T_i/T_{i-1},T_{i-1}).$  In this section, we investigate the number of possible nonequivalent short exact sequences of the above form for a specific ring. We start with  brief discussion on the Yoneda definition of $\ext^1.$

 Let 
$$\cdots \xrightarrow{} F_2 \xrightarrow{\partial_2}F_1\xrightarrow{\partial_1} F_0\xrightarrow{} N \xrightarrow{} 0$$
be a free resolution of $N,$  and so we have an exact sequence
$$ 0 \rightarrow \im\partial_1  \rightarrow F_0  \rightarrow N \rightarrow 0.$$
If we have some $(a +\im\partial^*_1 )\in \ext_R^1(N,M),$ then $a \in \hm(F_0, M).$ Define $a '$ to be $a$ restricted to $\im \partial_1$ and $\iota$ to be the natural inclusion from $\im \partial_1$  to $F_0.$ We can obtain the following commutative diagram using the pushout of $a'$ and $\iota.$

\[
\xymatrixrowsep{2.7pc}\xymatrixcolsep{1.8pc}\xymatrix{
  0 \ar@{->}[r]^{\qquad} &\im \partial_1 \ar@{->}[d]^{a'} \ar@{->}[r]^{\quad\iota\quad} &F_0 \ar@{->}[r]\ar@{->}[d] &  N \ar@{=}[d] \ar@{->}[r]^{\qquad}& 0 \\
  0 \ar@{->}[r] &  M\ar@{->}[r]^{\qquad} & (M\oplus F_0)/I \ar@{->}[r] &  N \ar@{->}[r]& 0 \\
}\]

\noindent where $I=(-a'(r), \iota(r) | r \in \im \partial_1).$ When considering filtrations, $(M \oplus R^{b_0})/I$ is the next larger submodule on the chain of submodules. In order to gain a better understanding of how this translates to a presentation matrix, we will compute this for the second module in a filtration over a specific ring.

For the reminder of this paper, let $S=k[X, Y, Z]/(X^2, Y^2, Z^2, YZ),$ and define $x, y, $ and $z$ to be the image of $X, Y, $ and $Z,$ respectively. Also, let $T_1=S/(x+by+cz)$ and $N=S/(x+dy+fz)$ where $b, c, d, f \in k,$  so they are the cokernal of an exact zero divisor. When considering $\ext_S^1(N, T_{1})$ by the Yoneda definition,  a non-zero element of $\ext_S^1(N, T_{1})$ corresponded to a short exact sequence of the form
$$0 \rightarrow T_1 \rightarrow T_2 \rightarrow N \rightarrow 0.$$
If $\ext_S^1(N, T_{1})\neq 0,$ then this will allow us to have a saturated TR-filtration of $T_2,$ 
$$0 \subset T_1 \subset T_{2}.$$
We now will find a presentation matrix for possible $T_2.$

Let
$$\FF_{N}: \qquad \cdots \xrightarrow{} S \xrightarrow{\partial_3} S\xrightarrow{\partial_2} S \xrightarrow{\partial_1} S \rightarrow 0$$
be a (deleted) minimal free resolution of $N$ where 
$$\partial_i = \left\{\begin{array} {l l} [x+dy+fz], & \quad \mbox{ if } i \mbox{ is odd} \\
\mbox{$[x-dy-fz],$} & \quad \mbox{ if } i \mbox{ is even. }\\ \end{array} \right.$$ 

For $(a +\im\partial^*_1) \in \ext_S^1(N, T_1)$ where $\alpha \in \ker(\hm_S(\partial_2,T_1)),$ we have that $\alpha \in \hm_S(S, T_1) \cong T_1.$ We will identify elements of $\hm_S(S, T_1)$ with $T_1$ under the natural isomorphism. Using the previous description,

\[
\xymatrixrowsep{2.7pc}\xymatrixcolsep{1.8pc}\xymatrix{
  0 \ar@{->}[r]^{\qquad} &\im \partial_1 \ar@{->}[d]^{\alpha'} \ar@{->}[r]^{\quad\iota\quad} &S\ar@{->}[r]\ar@{->}[d] &  N \ar@{=}[d] \ar@{->}[r]^{\qquad}& 0 \\
  0 \ar@{->}[r] &  T_1\ar@{->}[r]^{\qquad} & (T_1\oplus S)/I \ar@{->}[r] & N\ar@{->}[r]& 0 \\
}\]
 we find that the push out is $T_2:=(T_1 \oplus S)/I.$  To find a presentation matrix for $T_2,$ we need to find a matrix $\mathbf{T_2}$ in which the following complex is exact 
$$  S^2 \xrightarrow{ \mathbf{T_2} }S^2 \xrightarrow{p} T_2=(T_1 \oplus S)/I \rightarrow 0.$$

 Take $e_1, e_2$ to be the standard basis in $S^2$ and define $p(e_1)= (\overline{1}, 0) + I$ and $p(e_2)= (0, 1) + I$. To find the image of $\mathbf{T_2}$ we just need to find the kernel of $p$ which is shown below. 
 $$(x+by+cz) p(e_1) \equiv 0 \qquad \mbox{ and } \qquad -\alpha p(e_1) + (x+dy+fz) p(e_2) \equiv 0$$
 where $\alpha \in S/(x+by+cz)$. Therefore, a presentation matrix for  $T_2$ is
 $$\mathbf{T_2}=\left[\begin{array}{c c} x+by+cz & -\tilde{\alpha}\\ 0 & x+dy+fz \end {array} \right],$$
where $\tilde{\alpha}$ is a preimage of $\alpha$ in $S.$

%%%%%%%%%%%%%%%%%%%%%%%%%%%%%%%%%%%%%%%%%%%%%%%%%%%%%%%%%%%%%%%%%%%%%%%%%%%%%%%%%%%%%%%%%%%%%%%%%%%%%%%%%%%%%%%%%%%%%

\subsection{The Rank of $\ext_S^1(N,T_1)$ }

Since every element in $\ext^1$ can be viewed as a short exact sequence, to get a sense of how many nonequivalent sequences exist we study the $k$-vector space rank of $\ext^1.$ As before, let $T_1=S/ (x+by+cz)$ and $N=S/ (x+dy+fz).$  If
$$\FF_N: \qquad \rightarrow S \xrightarrow{\partial_2} S  \xrightarrow{\partial_1} S \rightarrow 0$$
is a (deleted) free resolution of $N$, then  the complex $\hm_S(\FF_{N}, T_1)$ has the form
$$ \qquad 0 \rightarrow T_1 \xrightarrow{\hm_S(\partial_1, T_1)} T_1\xrightarrow{\hm_S(\partial_2, T_1)} T_1 \xrightarrow{\hm_S(\partial_3, T_1)]} T_1  \rightarrow \cdots$$
 and $\ext_S^1(N, T_1)= \HH^1(\hm_S(\FF_{N}, T_1))=\ker(\hm_S(\partial_2, T_1))/\im (\hm_S(\partial_1, T_1)).$
%Find ker and image by mapping basis elements
We can compute the kernels and images of these maps:

\begin{eqnarray*}\ker (\hm_S(\partial_2, T_1)) &=& \left\{ \begin {array}{l l}
(\overline{1}) \cong T_1, \quad & \mbox{ if } b=-d \mbox{ and } c=-f\\
(\overline{y}, \overline{z}), \quad & \mbox{ otherwise}\\
\end{array} \right.\\
& & \\
\im (\hm_S(\partial_1, T_1))&=&((b-d)\overline{y}+ (c-f) \overline{z}).
\end{eqnarray*}

\noindent Therefore, 
$$\ext_S^1(N, T_1)=\left\{ \begin {array}{l l}
T_1/((d-b)\overline{y}+ (f-c) \overline{z}), \quad & \mbox{ if } b=-d \mbox{ and } c=-f\\
(\overline{y}, \overline{z})/((d-b)\overline{y}+ (f-c) \overline{z}), \quad & \mbox{ otherwise,}\\
\end{array} \right. $$
and, provided char$(k)\neq 2,$ we have

$$\rightarrownk (\ext_S^1(N, T_1))=\left\{ \begin {array}{l l}
3, \quad & \mbox{ if } b=c=d=f=0 \\
2, \quad & \mbox{ if } b=-d \mbox{ and } c=-f, \mbox{or if } b=d \mbox{ and } c=f\\
1, \quad & \mbox{ otherwise.}\\
\end{array} \right. $$

Suppose $(u, v)$ are an exact pair of zero divisors. Then for some $f_1,f_2 \in k$,  we have $u=x+f_1y+f_2z$ and $v=x-f_1y-f_2z.$  From the above computations, for some $[\alpha] \in \ext_S^1(S/u,S/v)$ and  some $S$-module $M,$ there exists the short exact sequence
$$\lambda:0 \rightarrow S/v \rightarrow M \rightarrow S/u \rightarrow 0,$$	
where $M$ has a presentation matrix 
 $$\mathbf{M}=\left[\begin{array}{c c} v & -\alpha\\ 0 & u \end {array} \right].$$
In this case, $1_S$ is a possibility for $-\alpha$  since $\overline{1} \in \ker(\hm_S(\partial_2, S/v)).$ However
 $$\coker \mathbf{M}= \coker \left[\begin{array}{c c} v & 1_S\\ 0 & u \end {array} \right] \cong S.$$
This implies that $\lambda$ represents part of the complete resolution of $R/v$. Since this is always the case when we have a pair of exact zero divisors, we exclude it to focus on when the non-syzygy cases occurs. Hence, we define $\Gamma_S(N, T_1)$  to represent the rank of the elements in $\ext_S^1(N, T_1)$ which are not part of a complete resolution of $T_1.$ Therefore, \\

\begin {equation} \label{rank} \Gamma_S(N, T_1) :=\left\{ \begin {array}{l l}
2 \quad & \mbox{ if } b=c=d=f=0  \mbox{ or } b=d \mbox{ and } c=f, \\
1 \quad & \mbox{ otherwise.}\\
\end{array} \right.\end{equation}

%%%%%%%%%%%%%%%%%%%%%%%%%%%%%%%%%%%%%%%%%%%%%%%%%%%%%%%%%%%%%%%%%%%

\subsection{Bounds on the Ranks of $\ext_R^1(T_{i}/T_{i-1}, T_{i-1})$}

Let $T_n$ be a totally reflexive module over $(R, \fm, k)$ with $\fm^3=0$ that has an $n\times n$ upper triangular minimal presentation matrix. From Theorem 1.4 in \cite{BT}, we know that if there exists one nontrivial totally reflexive modules and if $k$ is of characteristic zero, then there are infinitely many non-isomorphic indecomposable totally reflexive modules of each admissible length, specifically a multiple of the embedding dimension of the ring. Assume that $R$ is also a finite dimension $k$-algebra. We investigate the rank of $\ext_R^1(T_{i}/T_{i-1}, T_{i-1})$ to get a sense of the complexity (or simplicity) of modules of each possible length. Consider the short exact sequence 
$$0 \rightarrow T_{i-1} \rightarrow T_{i} \rightarrow T_{i}/T_{-1} \rightarrow 0. \label{T2seq}$$

From this short exact sequence we can obtain a long exact sequence of Ext
$$ \cdots \rightarrow \ext_R^1(T_{i}/T_{i-1}, T_{i-1}) \rightarrow\ext_R^1(T_{i}/T_{i-1}, T_{i}) \rightarrow \ext_R^1(T_{i}/T_{i-1}, T_{i}/T_{i-1})\rightarrow \cdots.$$
We then  have the inequality \\
\begin{center} \small{
$\rightarrownk(\ext_R^1(T_{i}/T_{i-1}, T_{i})) \leq \rightarrownk(\ext_R^1(T_{i}/T_{i-1},T_{i}/T_{i-1})) + \rightarrownk(\ext_R^1(T_{i}/T_{i-1}, T_{i-1})).$\\}
\end{center}
Now we will find an upper bound for $ \Gamma(T_{i}/T_{i-1}, T_{i-1})$ over the ring $S=k[X, Y, Z]/(X^2, Y^2, Z^2, YZ).$   By (\ref{rank}), if we consider the case when $i=2,$ then we also have that $1 \leq \Gamma(T_{2}/T_1, T_{1})\leq 2$. Therefore
$$\rightarrownk\Gamma_S(T_{2}/T_1, T_{2}) \leq 4.$$
By induction we see that 
$$\Gamma_S(T_{i}/T_{i-1}, T_{i}) \leq 2n \qquad \mbox { for } i=2, 3 \ldots.$$

In fact, we know of cases when this rank is bounded below by one. That is, there exists a nontrivial short exact sequence beginning in $T_i$ and ending in $T_{i}/T_{i-1}.$  To find such a case, per \cite{BT}, we define the $b \times b$ upper triangular matrix
\begin{equation} \label{BTmatrix} M_b(s, t, u, v)= \left[\begin{array} {c c c c c c}
s & u& 0& 0& 0 & \dots\\
0& t & v& 0& 0&  \dots\\
0& 0& s & u& 0& \dots	\\
0& 0 & 0& t & v&  \\
0& 0 & 0&  0& s& \ddots  \\
\vdots& \vdots& \vdots& \vdots& &\ddots 
\end{array} \right],\end{equation}
and consider the following theorem.

\begin{theorem} \label{BTthm}\cite[Theorem 3.1]{BT}  Let $(R, \fm)$ be a local ring and assume that $ s$ and $ t$  are elements in $\fm\backslash \fm^2 $ that form an exact pair of zero divisors. Assume further that $ u $ and $ v $ are elements in $\fm\backslash \fm^2$ with $ uv=0$  and that one of the following conditions holds:

\noindent(a) The elements $s, t,$ and $ u$ are linearly independent modulo $\fm^2.$\\
(b) One has $ s\in (t) +\fm^2 $ and $ u, v \notin (t) + \fm^2.$

\noindent For every $b \in \mathbb{N}$, the $R$-module $M_b(s, t, u, v)$ is indecomposable, totally reflexive, and non-free. Moreover $\coker M_b(s, t, u, v)$ has constant Betti numbers, equal to $b.$ %and its minimal free resolution is periodic of period at most 2. 
\end{theorem}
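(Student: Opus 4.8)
The plan is to exhibit $L:=\coker M_b(s,t,u,v)$ as a syzygy in an explicit two–periodic minimal totally acyclic complex and then to extract each of the four assertions from that complex; the hypotheses (a) and (b) will be needed only for indecomposability. The first step is purely computational: using commutativity together with $st=0$ (which holds because $(s,t)$ is an exact pair of zero divisors) and $uv=0$, one checks that
\[
M_b(s,t,u,v)\,M_b(t,s,-u,-v)=M_b(t,s,-u,-v)\,M_b(s,t,u,v)=0 .
\]
Both products are banded, vanishing below the diagonal and beyond the second superdiagonal, so only three kinds of entries occur: the diagonal entries, which equal $st$ or $ts$; the first–superdiagonal entries, each of which is a commutator and hence zero; and the second–superdiagonal entries, which equal $-uv$. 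This produces a complex of free modules
\[
\FF:\qquad \cdots\longrightarrow R^{b}\xrightarrow{\,M_b(s,t,u,v)\,}R^{b}\xrightarrow{\,M_b(t,s,-u,-v)\,}R^{b}\xrightarrow{\,M_b(s,t,u,v)\,}R^{b}\longrightarrow\cdots,
\]
all of whose entries lie in $\fm$, so $\FF$ is minimal.

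Next I would show $\FF$ is totally acyclic by induction on $b$. Splitting off the last coordinate gives a degreewise split short exact sequence of complexes $0\to\FF_{b-1}\to\FF\to\FF_{1}\to 0$, in which $\FF_{b-1}$ is the analogous complex built from $M_{b-1}(s,t,u,v)$ and its companion (these are the leading principal submatrices of $M_b(s,t,u,v)$ and $M_b(t,s,-u,-v)$), while $\FF_{1}$ is $\cdots\to R\xrightarrow{s}R\xrightarrow{t}R\to\cdots$, possibly with $s$ and $t$ interchanged according to the parity of $b$. The complex $\FF_{1}$ is acyclic because $(0:s)=(t)$ and $(0:t)=(s)$, and $\hm_R(\FF_{1},R)$ has the same shape, so $\FF_{1}$ is totally acyclic; by the inductive hypothesis so is $\FF_{b-1}$. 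Since the short exact sequence of complexes is degreewise split, both it and its $R$–dual give long exact sequences in homology whose two outer terms vanish, so $\FF$ and $\hm_R(\FF,R)$ are acyclic. Hence $\FF$ is a minimal totally acyclic complex, so each of its syzygies — in particular $L$ — is totally reflexive; because $\FF$ is minimal it is the minimal free resolution of $L$, whence $\beta^R_i(L)=b$ for all $i\ge 0$; and $L$ is not free, since a nonzero free module has projective dimension $0$ whereas $\FF$ is an infinite minimal resolution. (One checks $L\neq 0$ because the $(b,b)$ entry of $M_b(s,t,u,v)$ lies in $\fm$, so $M_b(s,t,u,v)$ is not invertible.)

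The remaining and hardest point is indecomposability, and this is where (a) and (b) are used. Any $\phi\in\hm_R(L,L)$ lifts to a chain endomorphism of $\FF$, i.e.\ to a pair of $b\times b$ matrices $(\alpha,\beta)$ over $R$ with $\alpha\,M_b(s,t,u,v)=M_b(s,t,u,v)\,\beta$; since the entries of $M_b(s,t,u,v)$ lie in $\fm$, the reduction $\bar\alpha$ over $k$ is well defined and equals the map induced by $\phi$ on $L/\fm L\cong k^{\,b}$. Reducing the commutation relation modulo $\fm^{2}$ yields a matrix identity $\bar\alpha\,M^{(1)}=M^{(1)}\bar\beta$ with entries in the $k$–vector space $\fm/\fm^{2}$, where $M^{(1)}$ is obtained from $M_b(s,t,u,v)$ by replacing $s,t,u,v$ with their classes $\bar s,\bar t,\bar u,\bar v$. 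Propagating the constraints from the corners inward — using the linear independence of $\bar s,\bar t,\bar u$ in case (a), respectively $\bar s\in k\bar t$ together with $\bar u,\bar v\notin k\bar t$ in case (b) — one shows that $\bar\alpha$ must be upper triangular with constant diagonal (indeed scalar in case (a)). It follows that the image of $\hm_R(L,L)$ in $\hm_k(L/\fm L,L/\fm L)$ contains no idempotent besides $0$ and $1$; since an idempotent $e$ of $L$ with $\bar e=0$ satisfies $e(L)=e^2(L)\subseteq\fm\, e(L)$ and hence $e(L)=0$ by Nakayama's lemma, idempotents lift, so $\hm_R(L,L)$ has no nontrivial idempotents and $L$ is indecomposable. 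I expect the "propagating the constraints inward" step — carrying the corner relations through all $b$ rows and columns to pin down $\bar\alpha$ — to be the main obstacle.
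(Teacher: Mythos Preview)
The present paper does not prove this theorem at all: it is quoted from \cite[Theorem~3.1]{BT} and used as a black box, so there is no proof here to compare your attempt against.

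For what it is worth, your outline tracks the argument in the original source \cite{BT} quite closely. There too one verifies $M_b(s,t,u,v)\,M_b(t,s,-u,-v)=0$ by the same three-band computation, assembles the two-periodic minimal complex $\FF$, and proves total acyclicity by an induction on $b$ via the degreewise split short exact sequence of complexes obtained from the upper-triangular shape (your $\FF_{b-1}$ and $\FF_1$). Total reflexivity, the constant Betti numbers, and non-freeness then follow from minimality exactly as you say. For indecomposability the original also lifts an endomorphism to an intertwining pair $(\alpha,\beta)$, reduces modulo $\fm^2$, and performs an entry-by-entry analysis under hypotheses (a) and (b) to force $\bar\alpha$ into the algebra of upper-triangular matrices with constant diagonal; your Nakayama argument for lifting idempotents is the standard finishing step. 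Your expectation that ``propagating the constraints inward'' is the main obstacle is accurate: in \cite{BT} that entrywise chase occupies the bulk of the proof and is done separately in the two cases, so be prepared for it to be lengthy rather than conceptually difficult.
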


Over $S$ the all of the exact pairs of zero divisors are of the form $( x+\alpha y+ \beta z, x-\alpha y- \beta z) $ for $ \alpha,  \beta \in k$ \ref{expairsTR}. Now consider the choices for $u, v \in \fm \backslash \fm^2$ whose product is zero. From part (a) of Theorem \ref{BTthm}, $ x+\alpha y+ \beta z, x-\alpha y- \beta z $ and $u$ must be linearly independent in $\fm/\fm^2. $ For $  \gamma, \eta,  \lambda, \tau  \in k $ let $u=\gamma y+\eta z$  and $v= \lambda y + \tau z.$ Therefore, for $\gamma\neq \pm 2 \alpha$ and $\eta \neq \pm 2 \beta$ we have that $M_b(x+\alpha y+ \beta z, x-\alpha y- \beta z, \gamma y+\eta z, \lambda y + \tau z)$ is a presentation matrix of a nontrivial indecomposable totally reflexive $S$-module. 

Although Theorem \ref{BTthm} is useful in finding some of the indecomposable totally reflexive modules that have an upper triangular presentation matrix, it says nothing about whether the choices of $ s, t , u, $ and $ v$ will lead to non-isomorphic modules. In fact, notice that for the choices above of $u$ and $v$ over $S,$ neither one of them contains an $x$ term. This is because if either of them did, then one can always find an equivalent presentation matrix, and thus an isomorphic module, which does not have an $x$ term on the super diagonal. 

%%%%%%%%%%%%%%%%%%%%%%%%%%%%%%%%%%%%%%%%%%%%%%%%%%%%%%%%%%%%%%%%%%%%%%%%%%%%%%%%%%%%%%%%%%%%%%

\section{Over Finite Fields}
Let us now consider the ring $\ZZ_2[X, Y, Z]/(X^2, Y^2, Z^2, YZ).$ Now we can list the four one-generated totally reflexive modules:
$$ (x),\quad (x+y),\quad  (x+z),\quad  (x+y+z),$$
all of which are non-isomorphic. From here we can construct all possible totally reflexive modules that have a $2 \times 2$ upper triangular presentation matrix, say
\begin{equation}\left[\begin{array}{c c}
u& a\\0&t \end{array} \right]. \label{2by2}
\end{equation}

However, there may be many modules that have isomorphic presentation matrices. Recall from \cite[Theorem 4.3]{equivmat}, that two modules are isomorphic if and only if they have equivalent presentation matrices. To discover which ones do, we start with the assumption that $a$ does not contain an $x$ term. Since $u$ and $t$ both must have an $x$ term,  we have that $\left[\begin{array}{c c}
u& a\\0&t \end{array} \right] $  is equivalent to $\left[\begin{array}{c c}
u& a-t\\0&t \end{array} \right]. $ Define 
$$\mathcal{T}=\{x, x+y, x+z, x+y+z\}\qquad \mbox{ and } \qquad \mathcal{ N}=\{y, z, y+z\}, \label {ordering}$$
and hence $u, t \in \mathcal{T}$ and $a \in \mathcal{N}.$
We will also impose an ordering on the elements in $\mathcal{T} $ and $\mathcal{ N}$ as the order listed above from smallest to largest. For $i=1,2$ let $u_i, t_i \in \mathcal{T}$ and $a_i \in \mathcal{N},$ the for two matrices
$$M_1=\left[\begin{array}{c c} u_1& a_1\\0&t_1 \end{array} \right] \qquad \mbox{ and } \qquad
 M_2=\left[\begin{array}{c c}u_2& a_2\\0&t_2 \end{array} \right]$$
 define $M_1$ to be \emph{smaller} than $M_2$ if
\begin{eqnarray*}
&&u_1<u_2 \mbox{ \: or }\\
&&u_1=u_2  \mbox{ \: and \,}t_1<t_2 \mbox{ \: or}\\
&&u_1=u_2,\, t_1=t_2 \mbox{ \: and } a_1<a_2.
\end{eqnarray*}
 Through the use of the CAS Magma, we can find the isomorphism classes of all totally reflexive modules that have a $2 \times 2$ upper triangular presentation matrix. We chose the smallest presentation matrix to represent each class. There are 24 non-isomorphic indecomposable totally reflexive modules with an upper triangular presentation matrix. 

In the below table, we list  representatives for each isomorphism class. For a matrix in the form of  (\ref{2by2}), the options for $a$ which represent non-isomorphic indecomposable totally reflexive modules are listed in the center of the table.

$$
\begin{array}{| l | | r | r | r | r |}
\hline 
u \; \downarrow \qquad t \ \rightarrow  & x & x+y & x+z & x+y+z\\
%\specialrule{.1em}{.05em}{.05em} 
\hline \hline
x & y, z, y+z & z & y&y\\
\hline
x+y&  z& y, z, y+z &  y&y\\
\hline
x +z& y & y&  y, z, y+z & z\\
\hline
x +y+z& y & y& z&  y, z, y+z\\
\hline
\end{array}
$$

Something to note about these modules when the coefficient field is $\ZZ_2$ is that it is not possible to interchange $u$ and $t,$while keeping the same $a,$ and have them be isomorphic to each other. However, this is not the case if instead we consider modules over $\ZZ_3[x, y, z]/(x^2, y^2, z^2, yz).$  Over this ring, a module with presentation matrix
 $\left[\begin{array}{c c} u& a\\0&t \end{array} \right]$ is isomorphic to one with $\left[\begin{array}{c c}t& a\\0&u \end{array} \right]$ as a presentation matrix.

Theorem \ref{thm1} is useful in determining how the totally reflexive submodules of a totally reflexive module are contained in one another in this special case. However, there are totally reflexive modules which do not have an upper triangular presentation matrix and thus do not have a saturated TR-filtration. We conclude with an example of such a module.

\section{An example with no upper triangular presentation matrix}
\begin{example}\label{notUT}
Consider the $S$-module $T$ with a presentation matrix of 
$$\mathbf{T}:=\left[ \begin{array}{cc}
x &  z \\
y  &  x
\end{array} \right], $$
 where $S$ is the same ring defined previously, with char$(k)=0$. This is a totally reflexive module. Take a free resolution of it

$$\FF_T: \qquad \cdots \rightarrow S^2 \xrightarrow{\left[\begin{array}{c c} x&z\\y&x  \end{array}\right]}  S^2 \xrightarrow{\left[\begin{array}{l l}  x&-z\\-y&x  \end{array}\right]} S^2 \xrightarrow{\left[\begin{array}{c c} x&z\\y&x \end{array}\right]} S^2 \rightarrow 0$$
and apply the functor $\hm_S(\_,S)$.

$$  \hm_S(\FF_T, S): \quad 0 \rightarrow S^2 \xrightarrow{\left[\begin{array}{c c}  x&z\\y&x  \end{array}\right]} S^2 \xrightarrow{\left[\begin{array}{l l}  x&-z\\-y&x  \end{array}\right]} S^2 \xrightarrow{\left[\begin{array}{c c}  x&z\\y&x   \end{array}\right]}  S^2 \rightarrow \cdots$$

Therefore, $\hm_S(\FF_T, S)\cong \FF_T$ and hence the complex $\hm_S (\FF_T, S)$ is also  exact. This implies that $T$ is a totally reflexive $S$-module. In particular, this module is not isomorphic to a totally reflexive module that has an upper triangular presentation matrix. This can be done by showing that the matrix $\mathbf{T}$ is not equivalent to an upper triangular matrix.

\begin{proposition} For $\mathbf{T} =\left[\begin {array} {c c}
 x&z\\y&x \end{array} \right], $  a totally reflexive $S$-module,  the totally reflexive module $T = \coker(\mathbf{T})$ is not isomorphic to a totally reflexive module that has a upper triangular presentation matrix.
\end{proposition}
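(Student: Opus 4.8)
The plan is to reduce the statement to a purely matrix-theoretic claim over $S$ and then dispose of it with a one-line computation modulo $\fm^2$. Since $T$ is totally reflexive over $S$ with $\mu(T)=2$, the Remark following Theorem \ref{YosThm} tells us that every minimal presentation matrix of $T$ is $2\times 2$, and $\mathbf{T}$ is one such. Suppose, for contradiction, that $T\cong T'$ for some totally reflexive $S$-module $T'$ having an upper triangular presentation matrix. By Theorem \ref{thm1}, $T'$ has a saturated TR-filtration; transporting that filtration along the isomorphism, so does $T$; and the construction in the proof of Theorem \ref{thm1}, applied to the $2$-generated module $T$, returns a $2\times 2$ \emph{upper triangular minimal} presentation matrix of $T$. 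By the essential uniqueness of minimal presentation matrices over a commutative Noetherian local ring \cite{equivmat}, this matrix is equivalent to $\mathbf{T}$. Hence it suffices to show that $\mathbf{T}$ is not equivalent to any $2\times 2$ upper triangular matrix over $S$.

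Next I would write down the obstruction. If $P\mathbf{T}Q$ is upper triangular for some $P=(p_{ij}),Q=(q_{ij})\in GL_2(S)$, then its $(2,1)$-entry vanishes; since the second row of $P\mathbf{T}$ is $(p_{21}x+p_{22}y,\ p_{21}z+p_{22}x)$, this says
\[
p_{21}\bigl(x\,q_{11}+z\,q_{21}\bigr)+p_{22}\bigl(y\,q_{11}+x\,q_{21}\bigr)=0 \quad\text{ in } S .
\]
Let $\overline{p}_{ij},\overline{q}_{ij}\in k$ be the constant terms of the $p_{ij},q_{ij}$. Because $x,y,z$ are linearly independent in $\fm/\fm^2$ (the embedding dimension of $S$ is $3$), comparing the coefficients of $x,y,z$ after reducing the displayed identity modulo $\fm^2$ yields
\[
\overline{p}_{22}\,\overline{q}_{11}=0,\qquad \overline{p}_{21}\,\overline{q}_{21}=0,\qquad \overline{p}_{21}\,\overline{q}_{11}+\overline{p}_{22}\,\overline{q}_{21}=0 .
\]

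Finally I would run the short case analysis forced by the first two equations. If $\overline{p}_{22}=\overline{p}_{21}=0$, the second row of $\overline{P}$ is zero, so $\overline{P}\notin GL_2(k)$, contradicting $P\in GL_2(S)$; symmetrically $\overline{q}_{11}=\overline{q}_{21}=0$ makes the first column of $\overline{Q}$ zero, again a contradiction. In the two remaining cases ($\overline{p}_{22}=\overline{q}_{21}=0$, or $\overline{q}_{11}=\overline{p}_{21}=0$) the third equation collapses to $\overline{p}_{21}\,\overline{q}_{11}=0$, respectively $\overline{p}_{22}\,\overline{q}_{21}=0$, and whichever factor vanishes once more kills a row of $\overline{P}$ or a column of $\overline{Q}$. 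So no such $P,Q$ exist, which proves the proposition.

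The only genuinely delicate step is the opening reduction: one must ensure that ``$T$ has an upper triangular presentation matrix'' can be sharpened to ``$T$ has an upper triangular \emph{minimal} presentation matrix'' before \cite{equivmat} is invoked — which is precisely what routing through the saturated TR-filtration of Theorem \ref{thm1} buys us. The matrix computation itself is routine once one observes that it is enough to examine the $(2,1)$-entry modulo $\fm^2$; structurally it reflects the fact that the $k$-span of the coefficient matrices of $\mathbf{T}=xI+y\left[\begin{smallmatrix}0&0\\1&0\end{smallmatrix}\right]+z\left[\begin{smallmatrix}0&1\\0&0\end{smallmatrix}\right]$ has an irreducible rank-$\le 1$ locus (a non-degenerate quadric cone), whereas the space of upper triangular $2\times 2$ matrices has a reducible one (a union of two planes), so that no $GL_2(k)\times GL_2(k)$ change of basis can match the two.
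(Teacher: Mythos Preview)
Your argument is correct and follows the same strategy as the paper: reduce to showing that $\mathbf{T}$ is not matrix-equivalent to an upper triangular matrix, then derive a contradiction from the resulting system of equations. The paper simply assumes the change-of-basis matrices have entries in $k$, writes out all four entries of $P\mathbf{T}Q$, and declares the resulting twelve scalar equations inconsistent (the verification is left implicit). Your version is both more careful and more economical: you justify the passage to $k$ by reducing modulo $\fm^2$, and you observe that only the $(2,1)$-entry is needed, yielding three equations whose inconsistency follows from a short case split. Your opening detour through Theorem~\ref{thm1} to secure a \emph{minimal} upper triangular presentation is a legitimate concern that the paper passes over silently; in context the paper treats ``presentation matrix'' as minimal throughout, but your route makes the logic explicit.
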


\begin{proof} Suppose that for an upper triangular matrix $\mathbf{U}$ we have $\coker \mathbf{T} \cong \coker\mathbf{ U},$
and hence $\mathbf{T}$ would be equivalent to $\mathbf{U}$. That is, there would exist  two invertible matrices  
$\left[ \begin {array} {c c} a&b\\c&d \end{array} \right]$ and $ \left[ \begin {array} {c c}  e&f\\g&h \end{array} \right] $ where $a, b, \ldots, h \in k$ such that

 \begin{eqnarray}
\left[ \begin {array} {c c} a&b\\c&d \end{array} \right] \;\mathbf{T} \; \left[ \begin {array} {c c}  e&f\\g&h \end{array} \right] = \mathbf{U}\label{product}. \end{eqnarray}
For $\alpha, \beta, \gamma, \zeta, \varphi, \psi, $ and $ \lambda $ in $k,$ let

$$ \mathbf{U}:= \left[ \begin {array} {c c} x+\alpha y + \beta z & \zeta x+ \varphi y + \psi z \\  0 & x+ \gamma y +\lambda z\end{array} \right]$$

\noindent Computing the products in (\ref{product}) gives us

\begin{eqnarray*}
\left[ \begin {array} {c c} a&b\\c&d \end{array} \right] 
 \left[ \begin {array} {c c} x&z\\y&x \end{array} \right ]
  \left[ \begin {array} {c c}  e&f\\g&h \end{array} \right] 
  &=& \left[ \begin {array} {c c} x+\alpha y + \beta z & \zeta x+ \varphi y + \psi z \\  0 & x+ \gamma y +\lambda z\end{array} \right] \\
  &&\\ &&\\
\small{\left[ \begin{array} {c c}(ae+bg)x +bey+agz & (af+bh)x + bfy +ah z\\ (ce+dg)x +edy+cgz & (cf-dh) x + dfy+chz  \end{array}\right]}
&=& \left[ \begin {array} {c c} x+\alpha y + \beta z & \zeta x+ \varphi y + \psi z \\  0 & x+ \gamma y +\lambda z\end{array} \right]. \\
\end{eqnarray*}

These yield a system of equations that one can show to be inconsistent. Therefore, $\mathbf{T}$ is not equivalent to $\mathbf{U}$ and thus $T$ is not isomorphic to another totally reflexive module that has a upper triangular presentation matrix.
\end{proof}
\end{example}

\begin{comment}  Showing the system of equations is inconsistent.
\begin{eqnarray}
&&ae+bg=1, \qquad be =\alpha, \qquad ag=\beta, \label{1}\\
&&af+bh=\zeta, \qquad bf=\varphi, \qquad ah = \psi, \label{2}\\
&&ce=-df, \qquad \: \quad ed=0, \qquad cg=0, \label{3}\\
&&cf+dh=1, \qquad df=\gamma, \qquad ch=\lambda. \label{4}
\end{eqnarray}

From (\ref{3}) assume that $c=0,$ and by (\ref 4) we have that $dh=1$ and $\lambda=0$. Since $d\neq 0, $ we must have that  $ h=\frac{1}{d}.$ By (\ref{3} ),  $-dg=0$ which implies  that $g=0$ and using (\ref{1}),  we have $a=\frac{a}{e}$ for $e\neq 0$. By (\ref{3}), $ed=0$ which implies $ d=0$, a contradiction to the matrix $\left[ \begin {array} {c c} a&b\\c&d \end{array} \right]$  being invertible. If instead we let $g=0,$ then this leads to an argument similar to the above one. 

\end{comment}

\subsection*{Acknowledgments}
I would like to thank David Jorgensen for his guidance in researching and writing this paper. I am also grateful to Graham Leuschke for his helpful comments and proofreading skills.
% Thanks to the USTARS team

\end{document}